\def\phi{\varphi}
\def\tn{\textnormal}
\newtheorem{thm}{Theorem}{\bf }{\it }
\newtheorem{statement}[thm]{Statement}{\bf }{\it }
\newtheorem{mythm}[thm]{Theorem}{\bf }{\it }
\newtheorem{prop}[thm]{Proposition}{\bf }{\it }
{\bf }{\it }
\newtheorem{cor}[thm]{Corollary}{\bf }{\it }
\newtheorem{lem}[thm]{Lemma}{\bf }{\it }
\newtheorem{defn}[thm]{Definition}{\bf }{\rm }
\newtheorem{mydef}[thm]{Definition}{\bf }{\rm }
{\bf }{\rm }
\newtheorem{exmp}[thm]{Example}{\bf }{\rm }
{\bf }{\it }
{\bf }{\rm }
{\it }{\rm }
\DeclareMathSymbol{\Gamma}{\mathalpha}{operators}{0}
\DeclareMathSymbol{\Delta}{\mathalpha}{operators}{1}
\DeclareMathSymbol{\Theta}{\mathalpha}{operators}{2}
\DeclareMathSymbol{\Lambda}{\mathalpha}{operators}{3}
\DeclareMathSymbol{\Xi}{\mathalpha}{operators}{4}
\DeclareMathSymbol{\Pi}{\mathalpha}{operators}{5}
\DeclareMathSymbol{\Sigma}{\mathalpha}{operators}{6}
\DeclareMathSymbol{\Upsilon}{\mathalpha}{operators}{7}
\DeclareMathSymbol{\Phi}{\mathalpha}{operators}{8}
\DeclareMathSymbol{\Psi}{\mathalpha}{operators}{9}
\DeclareMathSymbol{\Omega}{\mathalpha}{operators}{10}
\def\rupert#1{\sethlcolor{LimeGreen}\hl{\textsc{Rupert:} #1}}
\def\da{{\downarrow}}
\def\ua{{\uparrow}}
\def\rt{\mathrm{rt}}
\newcommand{\restrict}{\upharpoonright}
\newcommand\Tot{\mathrm{Tot}}
\newcommand{\G}{{\mathcal{G}}}
\newcommand{\F}{{\mathcal{F}}}
\newcommand{\HIGH}{\ensuremath{\mathsf{HIGH}}\xspace}
\newcommand{\DOM}{\ensuremath{\mathsf{DOM}}\xspace}
\newcommand{\AVOID}{\ensuremath{\mathsf{AVOID}}\xspace}
\newcommand{\HIM}{\ensuremath{\mathsf{HI}}\xspace}
\newcommand{\MAD}{\ensuremath{\mathsf{MAD}}\xspace}
\newcommand{\MED}{\ensuremath{\mathsf{MED}}\xspace}
\newcommand{\BIM}{\ensuremath{\mathsf{BI}}\xspace}
\newcommand{\DNR}{\ensuremath{\mathsf{DNR}}\xspace}
\newcommand{\RCA}{\ensuremath{\mathsf{RCA}_0}\xspace}
\newcommand{\WKL}{\ensuremath{\mathsf{WKL}_0}\xspace}
\newcommand{\ACA}{\ensuremath{\mathsf{ACA}_0}\xspace}
\newcommand{\PA}{\ensuremath{\mathsf{PA}}\xspace}
\newcommand{\ISt}{\ensuremath{\mathsf{I}\mathrm{\Sigma}_2^0}\xspace}
\newcommand{\ISthr}{\ensuremath{\mathsf{I}\mathrm{\Sigma}_3^0}\xspace}
\newcommand{\BSt}{\ensuremath{\mathsf{B}\mathrm{\Sigma}_2^0}\xspace}
\newcommand{\ISo}{\ensuremath{\mathsf{I}\mathrm{\Sigma}_1^0}\xspace}
\newcommand{\BSon}{\ensuremath{\mathsf{B}\mathrm{\Sigma}_1}\xspace}
\newcommand{\ISn}{\ensuremath{\mathsf{I}\mathrm{\Sigma}_n^0}\xspace}
\newcommand{\BSn}{\ensuremath{\mathsf{B}\mathrm{\Sigma}_n^0}\xspace}
\newcommand{\BSnp}{\ensuremath{\mathsf{B}\mathrm{\Sigma}_{n+1}^0}\xspace}
\begin{document}
	
\sloppy

\title{Independence and Induction in Reverse Mathematics}

\author{David Belanger}
\address{School of Physical and Mathematical Sciences, Nanyang Technological University; Singapore}
\email{david.belanger@ntu.edu.sg}
\urladdr{}

\author{Chi Tat Chong}
\address{Department of Mathematics, National University of Singapore; Singapore}
\email{chongct@math.nus.edu.sg}
\urladdr{https://blog.nus.edu.sg/chongct/}

\author{Rupert H\"olzl}
\address{Fakult\"at für Informatik, Universit\"at der Bundeswehr M\"unchen; Neubiberg, Germany}
\email{r@hoelzl.fr}
\urladdr{https://hoelzl.fr}

\author{Frank Stephan}
\address{Department of Mathematics, National University of Singapore; Singapore}
\email{fstephan@comp.nus.edu.sg}
\urladdr{https://www.comp.nus.edu.sg/~fstephan/}

\thanks{The investigators acknowledge the following partial support: F.~Stephan's reserach was supported by Singapore Ministry of Education AcRF Tier 2 grant MOE-000538-00 and AcRF Tier 1 grants A-0008454-00-00 and A-0008494-00-00, the grant A-0008494-00-00 also supported the research visits of R.~H\"olzl to the National University of Singapore and the past employment of D.~Belanger at the National University of Singapore.  C.~T.~Chong's research  was partially supported by NUS grant WBS E-146-00-0001-01.}


\begin{abstract}
We continue the project of the study of reverse mathematics principles  inspired by cardinal invariants. In this article in particular we focus on principles encapsulating the existence of large families of objects that are in some sense mutually independent. More precisely, we study the principle $\MAD$ stating that a maximal family of pairwise almost disjoint sets exists; and the principle $\MED$ expressing the existence of a maximal family of functions that are pairwise eventually different. We investigate characterisations of and relations between these principles and some of their variants. It turns out that induction strength at the levels of $\BSt$ or $\ISt$ is an essential parameter; for instance, over $\BSt$, we show that $\neg\MAD$ is equivalent to the principle $\DOM$ expressing that every weakly represented family of functions is dominated by some other function.
\end{abstract}

\maketitle

\section{Introduction}

Reverse mathematics is the program of classifying the strength of mathematical statements relative to each other over a base system --- see Simpson~\cite{SS}. These mathematical statements are often referred to as  ``principles,'' as in the Ramsey-style ``combinatorial principles'' that have captured much attention since the mid-1990's. 

H\"olzl, Raghavan, Stephan and Zhang~\cite{HRSZ17} introduced a number of principles modelled after notions of cardinal invariants from set theory.
In this article we continue and extend this study with  particular focus on principles that are in some sense related to the mutual independence of objects. More precisely, we use the notion of weakly represented families of objects introduced in the above article~\cite{HRSZ17} to express the existence of large families of objects and then investigate from a reverse-mathematical point of view the strength of principles that assert the existence of maximal families of objects that are ``independent'' of each other in one sense or another. We begin by recalling some basic facts in reverse mathematics.   

\section{Preliminaries} \label{section-preliminaries}
We are interested in models in the language of second-order arithmetic. These take the form $\mathcal{M}=(M,\mathcal S,+,\cdot,0,1,\in)$ where $M$ is the first-order part and $\mathcal S$ is a subset of the power set of $M$.
Typically we suppress the operations and write simply $\mathcal M = (M,\mathcal S)$. We call the elements of $M$ {\em numbers}, with the idea that they stand in for the natural numbers $\omega$ and obey many of the same axioms.
In this article we resist the convention of writing ``sets'' to mean the elements of $\mathcal S$, because we will often need to discuss sets $A \subseteq M$ which are possibly outside of $\mathcal S$. Call a formula a \emph{$\Sigma^A_n$ formula} if it is $\Sigma_n$ while using the given set $A$ as a second-order parameter (which we also call an \emph{oracle}).
Call a set $B \subseteq M$ a \emph{$\Sigma^A_n$ set} if $B$ is definable by a $\Sigma_n^A$ formula. Formulas with multiple oracles are allowed; call a formula or set $\Sigma^0_n$ if it is $\Sigma^{A_1,\ldots,A_k}_n$ for some $A_1,\ldots,A_k \in \mathcal S$; this includes the case where $k=0$ and the list is empty. Analogous definitions with ``$\Pi$'' or ``$\Delta$'' in place of ``$\Sigma$'' can be given in the obvious ways.

Here are some of the axioms we will consider. Let $P^-$ be the axioms of Peano arithmetic not including any induction schemes. 
 Let \ISn denote the induction scheme for $\Sigma^0_n$ formulas, and let \BSn denote the $\Sigma^0_n$-bounding scheme. 
Over $P^-$, Kirby and Paris \cite{KP} proved  the following strict implications  for every $n\ge 1$: 
\[\BSnp\rightarrow \ISn \quad \text{and} \quad\ISn \rightarrow \BSn\] 
(In their account there were no second-order parameters, but the proof is the same.)
 The system $\RCA$ consists of $P^-$ together with \ISo and the comprehension scheme for $\Delta^0_1$ sets. In particular, if $\mathcal M = (M, \mathcal S)$ is a model of $\RCA$, then $S$ is closed under Turing reducibility.

We say a nonempty set $A \subseteq M$ is \emph{$\mathcal M$-finite} if $A$ is in $\mathcal S$ and $A$ has a maximum element. (Our axioms are strong enough to support a number of equivalent definitions of $\mathcal M$-finite, but this one suits our purposes well enough.) We say $A \subseteq M$ is \emph{$\mathcal M$-infinite} if there is an injection $f \in \mathcal S$ from $M$ into $A$. If $\mathcal M$ is a model of $\RCA$, then every $A \in \mathcal S$ is either $\mathcal M$-finite or $\mathcal M$-infinite; but for general $A \subseteq M$, it is possible to be neither $\mathcal M$-finite nor $\mathcal M$-infinite. We say that $A \subseteq \mathcal M$ is \emph{regular} if for each $b \in M$, its initial segment $A \restrict b = A \cap \{0,\ldots,b-1\}$ is $\mathcal M$-finite. If $\mathcal M \models \RCA$ then every element $A \in \mathcal S$ is regular, and in particular every $\mathcal M$-finite set is regular. Using the standard quadratic pairing function $\langle \cdot,\cdot\rangle$ to represent an ordered pair (and again, leaning on the strength of our axiom systems) we can define an appropriate notion of $\mathcal M$-finite strings, that is, sequences of length $m \in M$ and with elements taken from $M$, via $\sigma =  \{\langle x,y\rangle \colon  x <m$ and $\sigma(x)=y\}$, if this set is $\mathcal M$-finite. We also render this sequence as $\sigma=\langle \sigma(0), \ldots, \sigma(m-1)\rangle$, abusing notation in the case $m=2$. We use $M^{<M}$ to denote the set of all $\mathcal M$-finite strings, and $2^{<M}$ to denote the set of all $\mathcal M$-finite strings which consist only of 0's and 1's, i.e.~binary strings.

In this paper, we consider principles related to set theory's cardinal invariants, in the context of  $\RCA$. In particular, we are interested in the role played by the inductive strength of a model of \RCA  in relation to these principles. This naturally requires an analysis of nonstandard models.

The families of functions and families of sets that we consider are the same as those considered by H\"olzl, Raghavan, Stephan and Zhang~\cite{HRSZ17}, and informally speaking consist of those functions that can be written as the ``rows'' $\Psi_e$ of some set~$\Psi$, with the following details: we require that $\Psi$~be $\Sigma^0_1$ (with parameters), as opposed to being an element of~$\mathcal S$; we require that each row~$\Psi_e$ be well-defined as a function, but not necessarily total; and all rows~$\Psi_e$ which are not total are simply ignored and do not contribute to the family. More formally, we will work with the following definition.
\begin{mydef}[Weakly represented families] \label{dafFam}
	Let $\Psi \subseteq M^2 \rightarrow M$ be a $\Sigma^0_1$ (with set parameters) partial function, and for each $e$ define $\Psi_e = \{\langle x,y\rangle \colon  \Psi(e,x) = y\}$. View $\Psi_e$ as a partial function $M \rightarrow M$. Define $\F =\{\Psi_e \colon  \Psi_e~\tn{is~total}\}$. Then $\F$ is a family of total functions $M \rightarrow M$; we say that $\F$ is \emph{weakly represented by $\Psi$}. In this case we call $\F$ a \emph{weakly represented family}.
\end{mydef}
 By identifying a set $A$ with its characteristic function, we can naturally speak of a family~$\F$ of sets~$A \subseteq M$ being weakly represented. We mention that these notions of weakly represented families also were used by H\"olzl, Jain and Stephan~\cite{HJS} to study inductive inference in the setting of reverse mathematics.
 
If $\F$ is a weakly represented family (of functions or of sets), then each element $f \in \F$ is $\Delta^0_1$ relative to sets in $S$  and therefore is in $\mathcal S$. By considering the model~$\mathcal M$'s version of the universal oracle Turing machine, we obtain the following result.
\begin{lem} \label{lemma-universal}
	For each $A \in \mathcal S$ there exists a $\Sigma^A_1$ partial function $\Phi^A \colon  M^2 \rightarrow M$ which is \emph{universal} in the sense that for every $\Sigma^A_1$ partial function $\Psi^A \colon  M^2 \rightarrow M$ there is a $\Delta_1^A$ function $f$ satisfying $\Psi^A_e = \Phi^A_{f(e)}$.
\end{lem}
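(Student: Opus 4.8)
The plan is to reconstruct, inside $\mathcal M$, the standard apparatus of oracle Turing machine computations---a universal machine together with the primitive recursive $s$-$m$-$n$ functions---which is available over $\RCA$; the coding of finite computations and the basic recursion-theoretic facts all go through there (see Simpson~\cite{SS}). Fixing $A \in \mathcal S$, I would let $\Phi^A$ be defined by the formula asserting ``the $i$-th oracle machine on input $x$ with oracle $A$ has a halting computation with output $y$''. The validity of a candidate $\mathcal M$-finite computation string, including correctness of its recorded oracle answers, is a bounded condition on that string (every oracle query made during the run is written in the string itself), so this formula is $\Sigma^A_1$; and since $\RCA$ proves that computations are deterministic, it defines a partial function $\Phi^A \colon M^2 \to M$. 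I would also record the $s$-$m$-$n$ property: there is a primitive recursive function $s$ with $\Phi^A_{s(i,e)}(x) = \Phi^A_i(\langle e,x\rangle)$ for all $i,e,x$.

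For the universality claim, let $\Psi^A \colon M^2 \to M$ be an arbitrary $\Sigma^A_1$ partial function. First I would put its defining formula into normal form: there is a bounded formula $\theta(e,x,y,t)$, having $A$ as its only oracle and the same number parameters as the original, such that $\Psi^A(e,x) = y$ iff $\exists t\, \theta(e,x,y,t)$; this uses only quantifier contraction and the replacement of $\exists a\,\phi$ by $\exists u\,\exists a \le u\,\phi$, so $\RCA$ suffices. Because $\theta$ is bounded and has $A$ as its only oracle, its truth value on any $\mathcal M$-finite box is $\mathcal M$-finite, so the procedure ``on input $(e,x)$, search through pairs $\langle y,t\rangle$ in increasing order and halt with output $y$ at the first one for which $\theta(e,x,y,t)$ holds'' is executed by an oracle machine; let $j \in M$ be one of its indices, where $j$ codes the (standard) search algorithm together with the (possibly nonstandard) number parameters of $\theta$. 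Here I would use the hypothesis that $\Psi^A$ is single-valued to see that this machine computes $\Psi^A$ exactly: if the search halts, its output $y$ witnesses $\exists t\,\theta(e,x,y,t)$ and hence $\Psi^A(e,x) = y$, while the search halts precisely when such a $y$ exists, that is, when $\Psi^A(e,x)$ is defined. Setting $f(e) = s(j,e)$ then gives $\Phi^A_{f(e)}(x) = \Phi^A_j(\langle e,x\rangle) = \Psi^A(e,x) = \Psi^A_e(x)$ for all $x$; and $f$ is primitive recursive, hence $\Delta^0_1$ and a fortiori $\Delta^A_1$, which is what is required.

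I expect the main obstacle to be bookkeeping rather than genuine mathematical difficulty: one must confirm that the coding of computations, the universal machine, the $s$-$m$-$n$ theorem, and the $\Sigma^A_1$ normal-form theorem all survive passage to a nonstandard model of $\RCA$, and in particular that number parameters---which cannot be hard-wired into a standard machine in the nonstandard setting---are instead absorbed into the index $j \in M$ that the universal machine decodes. The one point that is genuinely load-bearing, as opposed to routine, is the appeal to single-valuedness of $\Psi^A$: the search machine has no way of ``knowing'' that $\Psi^A$ is a function, so the correctness of the simulation rests on there being at most one admissible output.
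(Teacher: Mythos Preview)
Your proposal is correct and follows exactly the approach the paper indicates: the paper's entire proof is the single sentence ``By considering the model $\mathcal M$'s version of the universal oracle Turing machine, we obtain the following result,'' and you have simply unpacked what that entails (universal machine, $s$-$m$-$n$, $\Sigma^A_1$ normal form, absorption of parameters into the index). There is no divergence in method---you have given the details the paper omits.
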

	We use $\F^A_{\Tot}$ to denote the family of functions weakly represented by $\Phi^A$. Then every weakly represented family $\F$ is contained in $\F^A_{\Tot}$ for some $A \in \mathcal S$.
Lastly, we fix a definition of \emph{finite} that is suitable for weakly represented families.
\begin{defn}\label{infinite-family}
 A weakly represented family $\F$ is called \emph{finite} if there is a $\Psi$ which weakly represents it, and an $\mathcal M$-finite set $E$ such that $\F = \{\Psi_e \colon e \in E\}$. Otherwise, $\F$ is called \emph{non-finite}.
\end{defn}
In the absence of stronger induction axioms, a family $\F$ which is \emph{non-finite} in this sense does not satisfy certain intuitive properties of infinity. As an example: If $\neg \ISt$ holds, then the family $\F$ conisting of functions $f_e(x) = e$ if $\Phi_e^A(x) \da$ for $e < e_0$ is bounded, but in general not finite; this is because $\{e < e_0\colon \Phi_e^A \tn{is~total}\}$ is in general not $M$-finite. We bring this up to explain the presence of the unusual term {\em non-finite}, as well as to point out that certain of our results below depend on this definition, and could end up different if some other notion of {\em finite} were used.

The rest of this paper is structured as follows.
Section \ref{section-dom} deals with the principle $\DOM$ which expresses essentially that there is a function dominating every total  function in a weakly represented family; we prove a version of Martin's theorem \cite{Martin66a} characterizing such functions in terms of high Turing degrees.
Section \ref{section-key} is a technical section where we carry out a gener        al $0''$ tree argument for use in later sections, to construct new weakly represented families. Section \ref{section-mad} deals with the principle $\MAD$, which says that there is a maximal almost disjoint (MAD) family of sets. We obtain results comparing $\MAD$ with $\DOM$, and with $\ISt$. In Section~\ref{section-med}, we investigate the principles $\MED$, which says there is a maximal eventually different (MED) family of functions, as well as $\AVOID$, which says essentially that for every weakly represented family there is a function eventually different (ED) from all functions in the family; we obtain results about the relationship between the two. Section~\ref{section-ndf} covers  the principle $\HIM$, which says essentially that there is a hyperimmune Turing degree. Finally, Section~\ref{section-mind} is concerned with the principle $\BIM$, which says  that there is a bi-immune Turing degree; we use the existence of a low bi-immune Turing degree to obtain a conservation result.

\section{Dominating families of functions}\label{section-dom}

\begin{statement}[Domination principle, \DOM] Given any weakly
	represented family $\mathcal F$  of 
	functions, there exists a function $g $ such that $g$ \emph{dominates} $\mathcal F$, that is, for every
	$f\in \mathcal{F}$,  we have  $g(x)> f(x)$  for all sufficiently large $x$.
\end{statement}
	 
In an $\omega$-model --- meaning a model $(M,\mathcal S)$ in which $M$ is the true natural numbers --- if $X\in\mathcal S$ computes a function which dominates every total recursive function, then $X$~is generalised high, that is, $X'\equiv_T (X\oplus \emptyset')'$. This was first proven by Martin \cite{Martin66a}.
The phenomenon is reflected in $\RCA$ by the following proposition, proven essentially by Chong, Qian, Slaman and Yang \cite[Lemma 2]{CQSY}: 

\begin{prop}[Martin's Theorem, formalised]\label{martins-theorem}
For a model $(M,\mathcal{S})$, the following are equivalent over $\RCA$:
\begin{enumerate}
   \item[(i)] $\DOM{:}$ For every $\mathcal F$ that is a weakly represented family in $\mathcal{S}$ there is a function $g \in \mathcal{S}$ which grows faster than every $f \in \mathcal F$.
   \item[(ii)] $\HIGH{:}$ For every $A\in \mathcal{S}$ there is a $B\in \mathcal{S}$ such that
     every $\Sigma^A_2$ set $C \subseteq M$ is $\Delta^B_2$.
   \item[(iii)] For each $A\in \mathcal{S}$ and every $\Sigma^A_2$ set $C \subseteq M$
     there is a $B\in \mathcal{S}$ such that $C$ is $\Delta^B_2$.
\end{enumerate}
Note that we do not assume $C \in \mathcal{S}$.
\end{prop}
\begin{proof}
(i $\Rightarrow$ ii) Fix $A $ and let $f $ be a total function which dominates all total $\Delta_1^A$~functions. Take any $C$ which is $\Pi_2^A$, and let $\psi^A$ be the $\Delta_0^A$ formula satisfying
 \[x \in C \iff (\forall y)(\exists z)[\psi^A(x,y,z)].\]
Define $g(x,y)$ to be the partial $\Sigma_1^A$ function mapping each pair $\langle x,y\rangle$ to the least $z$ satisfying~$\psi^A(x,y,z)$. Then for a given $x$, we have $x \in C$ iff the function $\lambda y.g(x,y)$ is total $\Delta_1^A$\!. Hence,
  \begin{equation*}\begin{split}
	  x\in C \iff (\exists y_0) (\exists z_0)\left[
	  \begin{array}{c}
	  	(\forall y > y_0)(\exists z < f(y))[\psi^A(x,y,z)]\\
	  	\tn{and}~(\forall y \leq y_0) (\exists z< z_0)[ \psi^A(x,y,z)]
	  \end{array}
	  \right],
  \end{split}\end{equation*}
 meaning $C$ is $\Sigma_2^f$ and hence   $\Delta_2^f$.

\medskip

\noindent(ii $\Rightarrow$ iii) Immediate; the difference between (ii) and (iii) is that whereas in (ii) a single~$B$ is assigned to each $A$, in (iii) $B$ is allowed to vary with each $\Sigma^A_2$-definable set.

\medskip

		 \noindent(iii $\Rightarrow$ i) Suppose $\F$ is a weakly represented family of functions. Then $\F \subseteq \F^A_{\Tot}$ for some~$A$; so it suffices to exhibit some $g$ which dominates all $f \in F^A_{\Tot} = \{\Phi_e^A \colon  \Phi_e^A$ is total$\}$. 
		 
		 Since the predicate ``$\Phi_e^A$ is total'' is $\Pi^A_2$, there exists by (iii) a $B $ and a $\Sigma^B_0$ formula $\psi$ satisfying
	 \[\Phi^A_e\tn{~is~total} \iff (\exists s)(\forall t) [\psi^B(e,s,t)].\]
  Then  for every input $x$ let $g(x)$ be the least~$t_0>x$ such that 
  \[(\forall e < x)\left[\tn{either}~\Phi_{e,t_0}^A(x) \da ~\tn{or}~(\forall s < x)(\exists t < t_0) [\neg \psi^A(e,s,t)]\right].\]
  Then $g$ is well-defined, $\Sigma_1^B$ and total.   By definition it dominates each total $\Delta_1^A$~function.
\end{proof}
This formalised version of Martin's Theorem allows for a short proof of the previously known fact that  $\DOM$ and $\BSt$ together imply full arithmetical induction and thus Peano Arithmetic $(\PA)$.
\begin{mythm}[H\"olzl, Raghavan, Stephan, Zhang~\cite{HRSZ17}] \label{dom-induction}
	$\RCA+\DOM+\BSt$ implies $\ISn$ for every $n\ge 1$. In short,
$\RCA+\DOM+\BSt \vdash \PA$.
\end{mythm}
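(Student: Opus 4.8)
The plan is to derive $\ISn$ from the formalized Martin's Theorem (Proposition~\ref{martins-theorem}) by a contrapositive argument, using the equivalence $\DOM \Leftrightarrow \HIGH$ together with $\BSt$. Fix a standard $k\ge 1$; the case $k=1$ is just $\ISo$, which is part of $\RCA$, so assume $k\ge 2$. Suppose some instance of $\Sigma^0_k$-induction fails: there is a $\Sigma^0_k$ formula $\phi$, say with oracle $A_0\in\mathcal S$, with $\phi(0)$ and $\forall x(\phi(x)\to\phi(x+1))$, but $\neg\phi(b)$ for some $b$. Put $S=\{x:\phi(x)\}$ and $T=S\cap[0,b]$. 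Then $T$ is nonempty (it contains $0$), bounded by $b$, and has no maximum: a maximum $m$ would satisfy $m<b$, hence $m+1\le b$ and $\phi(m+1)$ by successor-closure, so $m+1\in T$, a contradiction. I want to reach a contradiction by showing $T\in\mathcal S$, because in $\RCA$ every bounded member of $\mathcal S$ is regular, hence $\mathcal M$-finite, hence has a maximum.

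First I would use Proposition~\ref{martins-theorem} to pass from $\DOM$ to $\HIGH$, and then iterate $\HIGH$ a standard number $k-1$ of times, beginning with $A_0$, to obtain some $B\in\mathcal S$ such that every $\Sigma^{A_0}_k$ set is $\Delta^B_2$. The iteration step is the observation that once every $\Sigma^{A_0}_j$ set is $\Delta^B_2$, the same holds for every $\Pi^{A_0}_j$ set, so every such set is $\Sigma^B_2$ and hence every $\Sigma^{A_0}_{j+1}$ set is again $\Sigma^B_2$; one more application of $\HIGH$ then lowers these to $\Delta_2$ over a new oracle. In particular $S$ itself is $\Delta^B_2$: there are $B$-recursive matrices $\alpha,\gamma$ with $x\in S\iff\exists u\,\forall v\,\alpha(x,u,v)$ and $x\notin S\iff\exists u'\,\forall v'\,\gamma(x,u',v')$.

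Next I would invoke $\BSt$. Since for each $x\le b$ exactly one of the two displayed alternatives holds, the $\Sigma^B_2$ statement ``there is a $w$ such that $\forall v\,\alpha(x,w,v)$ or $\forall v'\,\gamma(x,w,v')$'' holds for all $x\le b$; as $\BSt$ includes $\Sigma_2$-bounding relative to every oracle in $\mathcal S$, there is a single $c$ with $\forall x\le b\,\exists w<c\,[\forall v\,\alpha(x,w,v)\vee\forall v'\,\gamma(x,w,v')]$. Below the bound $c$ the two alternatives separate, so on $[0,b]$ membership in $S$ is equivalent both to $\exists w<c\,\forall v\,\alpha(x,w,v)$ and to the negation of $\exists w<c\,\forall v'\,\gamma(x,w,v')$; routine bounded-quantifier manipulation, available from $\RCA$, shows the first of these is $\Pi^B_1$ and the second $\Sigma^B_1$. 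Hence $T$ is $\Delta^B_1$, so $T\in\mathcal S$ by $\Delta^0_1$-comprehension, which is the contradiction we wanted. As $k$ was arbitrary, $\ISn$ holds for all $n\ge 1$.

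The step I expect to be the crux is the use of $\BSt$: the $\Delta^B_2$ description of $S$ contains a $\Pi_1$ inner quantifier, and one cannot bound the search for its witnesses using only a function that dominates a weakly represented family---the relevant witness function is partial and is not $\Sigma^0_1$, so $\DOM$ does not apply to it. The right move is to bound instead the witnesses to the \emph{disjunction} ``$x\in S$ or $x\notin S$'', which is genuinely total on the bounded interval $[0,b]$ precisely because $S$ is $\Delta_2$, and then feed this total $\Sigma_2$ predicate to $\Sigma_2$-collection; everything after that is bookkeeping with the arithmetical hierarchy and the elementary facts about $\mathcal M$-finiteness recalled in Section~\ref{section-preliminaries}.
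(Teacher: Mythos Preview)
Your proof is correct. Both your argument and the paper's iterate $\HIGH$ (via Proposition~\ref{martins-theorem}) a standard finite number of times and then invoke $\BSt$, so the core strategy is the same; the difference is in packaging. The paper builds a chain $A_0, A_1, \ldots$ with each $A_{n+1}$ high over $A_n$, deduces $A_0^{(n)} \le_T A_{n-1}'$, observes that $A_{n-1}''$ is $\Delta_2^{A_n}$ and hence regular by $\BSt$, and then quotes the regularity characterization of $\ISn$ from Lemma~\ref{lem:regular}. You instead take a putative failing instance of $\Sigma^0_k$-induction, lower its defining set to $\Delta^B_2$ by the same iteration, and use $\BSt$ explicitly to bound witnesses and drop the bounded part to $\Delta^B_1$. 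The paper's route is shorter because Lemma~\ref{lem:regular} absorbs the bookkeeping you carry out by hand; yours is more self-contained and makes the exact point at which $\BSt$ enters more transparent.
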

We will use the following well-known fact in the new proof.
\goodbreak
 \begin{lem} \label{lem:regular}
  The following statements hold for every model $(M, \mathcal S)$ of $\RCA$:
  \begin{enumerate}
   \item $\ISn$ iff every $\Sigma^0_n$ set is regular iff every $A^{(n)}$ is regular for  $A \in \mathcal S$,
   \item $\BSn$ iff every $\Delta^0_n$ set is regular iff every $B \leq_T A^{(n-1)}$ is regular for $A \in \mathcal S$.
  \end{enumerate}
 \end{lem}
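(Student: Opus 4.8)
Since the statement is a standard one, the plan is mostly to assemble it from the Kirby--Paris characterisation of the fragments $\ISn$ and $\BSn$ together with the coincidence of the relativised arithmetical hierarchy with the Turing jump hierarchy; I would organise it so that the recursion-theoretic bookkeeping is done first and the one genuinely proof-theoretic input is quoted.

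First the equivalences among the regularity clauses. For item~(2) there is nothing to prove: a subset of $M$ is $\Delta^0_n$ (with parameters from $\mathcal S$) if and only if it is $\le_T A^{(n-1)}$ for some $A\in\mathcal S$, so the second and third clauses of~(2) are assertions about literally the same family of sets. For item~(1), one direction is free because each $A^{(n)}$ is itself a $\Sigma^0_n$ set. For the other direction, take a $\Sigma^0_n$ set $X$ with parameter $A$; it is $\Sigma^0_1$ relative to $A^{(n-1)}$, hence $\le_1(A^{(n-1)})'=A^{(n)}$ via a $\Delta^0_1$ function $f$. Assuming $Y:=A^{(n)}$ is regular, fix $b$: since $\RCA$ proves $\BSo$, the image $f[\{0,\dots,b-1\}]$ is bounded, say by $b'$, and then $X\restrict b=\{x<b:f(x)\in Y\restrict b'\}$ is $\Delta^0_1$ in parameters from $\mathcal S$ (the graph of $f$ and the $\mathcal M$-finite set $Y\restrict b'$), hence lies in $\mathcal S$ by $\Delta^0_1$-comprehension, and being bounded it is $\mathcal M$-finite by the preliminaries. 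Thus regularity of all the $A^{(n)}$ implies regularity of all $\Sigma^0_n$ sets.

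It remains to match the first clause of each item with the others, i.e.\ that over $\RCA$ the scheme $\ISn$ (resp.\ $\BSn$) is equivalent to ``every $\Sigma^0_n$- (resp.\ $\Delta^0_n$-) definable subset of $M$ is regular.'' This is the Kirby--Paris theorem, and here the plan is to invoke \cite{KP} (see also \cite{SS}): the proof there is written without second-order parameters, but it transfers verbatim with parameters from $\mathcal S$, since a parameter only acts as a passive oracle. In outline, the scheme is precisely what forbids a definable proper cut closed under successor of the relevant complexity; any definable set that fails to be regular produces such a cut (the set of indices $b$ for which the $b$-th initial segment lies in $\mathcal S$, which is downward closed because $\mathcal S$ is closed under intersecting with an initial segment and closed under successor because $\mathcal S$ absorbs a single new point), while conversely a failing instance of the scheme produces directly a definable set that contradicts the regularity assertion --- e.g.\ the range of the associated minimisation function, which has boundedly many elements yet is unbounded in $M$. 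I expect the step needing the most care to be the complexity bookkeeping in this last part: checking that the cuts and auxiliary sets that arise lie at exactly the levels $\Sigma^0_n$, $\Pi^0_n$, $\Delta^0_n$ required, which rests on the usual (fussy but routine) facts that, given the appropriate amount of bounding, $\Sigma^0_k$ and $\Pi^0_k$ are closed under bounded quantification and that bounded sets defined by sufficiently simple formulas are coded. With those in hand, the two items follow by chaining the equivalences above.
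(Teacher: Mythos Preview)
The paper does not actually prove this lemma: it is introduced with ``We will use the following well-known fact'' and then the text moves directly to the proof of Theorem~\ref{dom-induction}. So there is nothing to compare against line by line; the paper simply defers to the literature, in the same spirit as its earlier remark that the Kirby--Paris argument goes through unchanged in the presence of second-order parameters.

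Your proposal is consistent with this stance and is a correct outline. The reduction of the third clause in each item to the second (via Post's theorem identifying the $\Sigma^0_n$/$\Delta^0_n$ hierarchy with the jump hierarchy, formalised with the witness-encoding care you allude to) and the invocation of \cite{KP} for the equivalence of $\ISn$ (resp.\ $\BSn$) with regularity of $\Sigma^0_n$- (resp.\ $\Delta^0_n$-) definable sets are exactly the standard ingredients. Your closing caveat --- that the real work is the complexity bookkeeping showing the auxiliary cuts and sets land at the right level, using closure of $\Sigma^0_k$ under bounded quantification given the appropriate bounding --- is accurate and is where a fully written-out proof would spend its effort.
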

\begin{proof}[Proof of Theorem~\ref{dom-induction}]
Fix $(M,\mathcal S)\models \text{RCA}_0+\DOM+\BSt$ and let $A_0 \in\mathcal S$.  Use Proposition~\ref{martins-theorem} to obtain a sequence $\{A_n\colon n\in\omega \}$ such that $A_{n+1}$ is 
 high relative to $A_n$. In particular, the double jump $A_n''$ is $\Delta_2^{A_{n+1}}$ and therefore regular by $\BSt$ and Lemma~\ref{lem:regular}.
	Since $A_0'' \leq_T A_1'$, we also have $A_0''' \leq_T A_1''$. Hence by transitivity ($\BSon$ relative to $A_2'$), we have $A_0''' \leq_T A_2'$ and hence that $A_0'''$ is regular. Iterating this argument for larger and larger $n$, we find that $A_0^{(n)} \leq_T A_{n-1}'$ and $A_0^{(n)}$ is regular for all $n\in \omega$. The result follows from Lemma~\ref{lem:regular} and the fact that $A_0$ was arbitrary.
\end{proof}
 Thus it is all the more surprising that, by the following theorem, $\DOM$ does not imply~$\BSt$. The following theorem also gives an alternative (but similar) proof that $\HIGH$ does not imply $\ACA$, by way of conservation in place of cone avoidance. Recall that a system  $T_1$~is $\Pi^1_1$\nobreakdash-conservative over a system $T_2$  if $T_1\supset T_2$ and every $\Pi^1_1$-sentence provable in $T_1$ is provable in~$T_2$. By a well-known observation of Harrington, one can establish $\Pi^1_1$-conservation by showing that every countable model $\mathcal M=(M,\mathcal S)$ of $T_2$ may be extended to a model $\mathcal M'=(M,\mathcal S')$ of $T_1$ having the same first-order part $M$ and such that $\mathcal S\subset\mathcal S'$.

  \begin{mythm}\label{DOM-1} $\RCA+\DOM$ is $\Pi^1_1$-conservative over $\RCA$.
  \end{mythm}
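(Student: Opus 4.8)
The plan is to use Harrington's criterion for $\Pi^1_1$-conservation described right before the theorem: given a countable model $\mathcal M = (M,\mathcal S)$ of $\RCA$, I would build an $\omega$-chain of set systems $\mathcal S = \mathcal S_0 \subseteq \mathcal S_1 \subseteq \cdots$ keeping the first-order part $M$ fixed, and set $\mathcal S' = \bigcup_n \mathcal S_n$. The goal is that $\mathcal S'$ satisfies $\DOM$, and that $\mathcal S'$ still models $\RCA$ — the latter means $\mathcal S'$ must be closed under Turing reducibility and must still satisfy $\ISo$ (and $\Delta^0_1$-comprehension), which is automatic since the first-order part is unchanged and $\ISo$ is a first-order scheme (with parameters from $\mathcal S'$, but every such parameter already lies in some $\mathcal S_n$, a model of $\RCA$).

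The engine of the construction is Proposition~\ref{martins-theorem}: $\DOM$ is equivalent to the statement that for every $A$, every $\Sigma^A_2$ set is $\Delta^B_2$ for some $B$ — equivalently, that there is a $B$ computing a function dominating all total $\Delta^A_1$ functions. So at stage $n$ I would enumerate all (codes for) oracles $A \in \mathcal S_n$; since the model is countable there are countably many, and I would handle one "dominating requirement" per stage via a standard bookkeeping. Given $A \in \mathcal S_n$, I must adjoin a set $B$ together with enough of its Turing-closure so that $B$ dominates all total $\Delta^A_1$ functions, while not collapsing induction. The natural candidate is to take $B$ to be the $\mathcal M$-version of a function dominating $\F^A_{\Tot}$ that is, say, low over $A$, or more precisely a set that is "$A''$-computable" in the model's sense but as tame as possible. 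The clean way to phrase it: in $\mathcal M$ one can form $A''$ (as a $\Delta^{A''}_1$-closed class — but $A''$ need not be in $\mathcal S_n$); what one actually wants is a single set $B$ with $A' \leq_T B$ and $B \leq_T A''$ — a "uniformly low$_2$" dominating set — and to adjoin $B$ and everything Turing-below it.

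The technical heart, and the step I expect to be the main obstacle, is verifying that the resulting $\mathcal S'$ is a model of $\RCA$, i.e.\ that adjoining these dominating sets does not introduce a failure of $\ISo$ — but since $\ISo$ holds in $\mathcal M$ for all parameters in $M$ and we add no first-order elements, $\ISo$ persists; the real content is rather to make sure each stage's $\mathcal S_n$ is genuinely a structure (closed under join and Turing reducibility) and that the limit still is. More delicate: one must check that a suitable dominating $B$ exists inside the model at all, i.e.\ that $\mathcal M \models$ "for every $A$ there is a set $B \leq_T A''$ with $A' \leq_T B$ such that some $B$-computable function dominates every total $\Delta^A_1$ function." This is exactly the content of the (iii~$\Rightarrow$~i) direction of Proposition~\ref{martins-theorem} run internally, or can be obtained by a low-basis-style or forcing construction relativized to $A''$ and carried out inside $\mathcal M$; here one leans on the fact that $\mathcal M$ — being only a model of $\RCA$, not of any more induction — still supports the $0''$-style argument producing such a $B$, because the construction of a low-over-$A$ dominating set is effective enough to go through with only $\ISo$. (This is precisely the kind of $0''$ tree argument flagged as the content of Section~\ref{section-key}, so I would cite that machinery.) Once each stage works, the limit model $\mathcal M' = (M, \mathcal S')$ satisfies $\DOM$ by the bookkeeping, models $\RCA$ as argued, and has the same first-order part, so Harrington's observation yields $\Pi^1_1$-conservation.

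One subtlety worth flagging in the write-up: $\DOM$ quantifies over \emph{weakly represented} families $\F$, not just over the $\F^A_{\Tot}$; but by Lemma~\ref{lemma-universal} every weakly represented family (with parameters from $\mathcal S'$) is contained in some $\F^A_{\Tot}$ with $A \in \mathcal S'$, hence in some $\F^A_{\Tot}$ with $A \in \mathcal S_n$ for large $n$, so dominating all the $\F^A_{\Tot}$ over all $A \in \mathcal S'$ suffices — this reduction is already implicit in the proof of Proposition~\ref{martins-theorem}(iii~$\Rightarrow$~i) and I would just reuse it verbatim.
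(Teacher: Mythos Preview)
Your high-level strategy (Harrington's method, build an $\omega$-chain of topped models with the same first-order part) matches the paper, but the key step contains a genuine gap.

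The claim that ``$\ISo$ persists'' because ``the first-order part is unchanged and $\ISo$ is a first-order scheme'' is false. $\ISo$ is a scheme with \emph{second-order} parameters: when you adjoin a new set $B$ to $\mathcal S_n$, you create new instances of $\ISo$ --- namely, induction for $\Sigma_1^B$ formulae --- which are not implied by $\ISo$ in the old model. This is not a technicality; it is the entire content of the theorem. Your concrete proposal makes this visible: if you take $B$ with $A' \leq_T B$, then every $\Sigma_2^A$ set becomes $\Sigma_1^B$, so $\ISo$ with parameter $B$ would entail $\ISt$ with parameter $A$ in the original model --- which was never assumed. (Your later phrase ``low-over-$A$ dominating set'' is also inconsistent: a set low over $A$ cannot dominate $\F^A_{\Tot}$, by Martin's theorem.) Finally, the machinery of Section~\ref{section-key} that you invoke explicitly assumes $\ISt$, so it is unavailable here, and the construction cannot be carried out internally relative to $A''$ since $A''$ is not in $\mathcal S$.

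The paper's proof does the key step differently: it builds $f_{n+1}$ \emph{externally}, by finite initial segments $\sigma_0 \subset \sigma_1 \subset \cdots$ indexed by the true $\omega$. Countability of $\mathcal M_n$ lets one list all total $\Delta_1^{f_n}$ functions $(g_i)_{i<\omega}$, all $\Sigma_1^{f_n}$ formulae $(\phi_i)_{i<\omega}$, and a cofinal sequence $(c_k)_{k<\omega}$ in $M$. At stage $k$ one extends $\sigma_k$ to $\sigma_{k+1}$ so that (i) it dominates $g_0,\ldots,g_{k-1}$ on the new interval, (ii) its length exceeds $c_k$, and (iii) it \emph{forces the jump}: either $\phi_k(\sigma_{k+1})$ holds or no further extension makes $\phi_k$ hold. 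Clause (iii) is exactly what guarantees that every $\Sigma_1^{f_{n+1}}$ fact is decided at a standard-finite stage, and hence that $\mathcal M_n[f_{n+1}] \models \ISo$. That jump-forcing step is the missing idea in your outline.
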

  
  \begin{proof}
	  Let $\mathcal M_0 = (M,\mathcal S_0)$ be a countable model of $\RCA$ topped by a function $f_0 \in\mathcal S$; that is, every $A \in \mathcal S$ is Turing-reducible to $f_0$. We will construct a sequence of functions $(f_n)_{n < \omega}$, and with it a sequence of countable models $( \mathcal M_n )_{n < \omega}$ given by 
	  \[\mathcal M_{n+1} = \mathcal M_n[f_{n+1}] = (M,\mathcal S_{n+1}),\]
	  where $\mathcal S_{n+1}$ is the closure of $\mathcal S \cup \{f_{n+1}\}$ under $\Delta^0_1$ comprehension.
 We ensure that each $f_{n+1}$ dominates $\F_{\Tot}^{f_n}$ and each $\mathcal M_n$ satisfies $\ISo$ (and, in particular, $\mathcal M_n$ is topped by $f_n$). Then the limiting model $\mathcal M_\omega = (M, \bigcup_{n < \omega} \mathcal S_n)$ will satisfy $\RCA + \DOM$, proving the theorem by Harrington's method.

	 For the inductive step, begin with the countable model $\mathcal M_n \models \ISo$ topped by $f_n$, and consider the class $\F_{\Tot}^{f_n}$. Since $\mathcal M_n$ is countable, we may (externally to the model) let $(g_i)_{i < \omega}$ be a list of all $\Delta_1^{f_n}$ total functions; let $(\phi_i^{f_n})_{i < \omega}$ be a list of all $\Sigma_1^{f_n}$ formulae; and let $(c_k)_{k < \omega}$ be an increasing sequence that is cofinal in $M$. We construct $f_{n+1}$ by initial segments $\sigma_k \in M^{<M}$\!\!. Beginning with $\sigma_0$ as the empty string, for each $k \in \omega$ we choose the next $\sigma_{k+1}$ to satisfy
\begin{itemize}
	\item[(i)] $\sigma_{k+1}$ is a proper extension of $\sigma_k$,
	\item[(ii)] $|\sigma_{k+1}| > c_{k}$,
	\item[(iii)] $\sigma_{k+1}(x) \geq g_i(x)$ whenever $i < k$ and $|\sigma_k| < x \leq |\sigma_{k+1}|$, and
	\item[(iv)] either $\mathcal M_n \models \phi_k^{f_n}(\sigma_{k+1})$ or $\mathcal M_n \models (\forall \tau \supseteq \sigma_{k+1}) [\neg \phi_k^{f_n}(\tau)]$.
\end{itemize}
  Because $k$ is always in $\omega$, some suitable $\sigma_{k+1}$ exists at every stage. This completes the construction.

	  The requirements (i) and (ii) together ensure that $f$ is a well-defined, total function (rather than having a proper cut as its domain). Item (iii) ensures that $f_{n+1}$ dominates each function in $\mathcal M_n$, and (iv) is a form of ``forcing the jump'' that ensures that $\mathcal M_n[f_{n+1}]$ models~\ISo.
	  This completes the proof.
  \end{proof}

 \section{Handling index sets}\label{section-key}
 This technical section lists some methods for producing new weakly represented families $\G$ from known families $\F$, in models of $\ISt$. 
These take the form of sufficient conditions on the set $B$ in the definition
 $\G = \{\Psi_e \colon  e \in B\}$,
where $\F$ is weakly represented by $\Psi$. However, the constructions are not as simple as just restricting $\Psi$ to the rows in $B$, because we would like $B$ to be of higher complexity than that would allow. Instead, we use a conventional $0''$~infinite injury argument --- reminiscent of, for instance, Yates~\cite{Y69} --- to get a new $\Theta$ which weakly represents $\G$, and which can be viewed as a scrambled version of $\Psi$ restricted to $B$.

We begin with an easy but helpful lemma; here, we let
\[\rt B = \{\sigma \in 2^{<M} \colon  \sigma(b) = 0 \rightarrow b \in B\},\]
that is, $\rt B$ consists of the binary strings which are either substrings of the complement $\overline B$'s characteristic function, or lexicographically to the right of that characteristic function. 
 \begin{lem}\label{pi2-strings}
  Let $(M,\mathcal S)$ be a model of $\RCA + \ISt$ and assume that $B \subseteq M$ is $\Pi^A_2$ for some~$A$. Then $\rt B$ is~$\Pi^A_2$.
 \end{lem}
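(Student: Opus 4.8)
The plan is to unwind the definitions of $\rt B$ and of "$\Pi^A_2$" and then exploit the fact that, by $\ISt$, the quantifier counting does not blow up. Recall that $\sigma \in \rt B$ iff $\sigma \in 2^{<M}$ and for every $b < |\sigma|$, if $\sigma(b) = 0$ then $b \in B$. Write the $\Pi^A_2$ definition of $B$ as $b \in B \iff (\forall y)(\exists z)\,\theta^A(b,y,z)$ with $\theta^A$ a $\Delta^A_0$ (indeed $\Sigma^A_0$) formula. Substituting, $\sigma \in \rt B$ iff
\[
\sigma \in 2^{<M} \ \wedge\ (\forall b < |\sigma|)\bigl[\sigma(b) = 0 \rightarrow (\forall y)(\exists z)\,\theta^A(b,y,z)\bigr].
\]
The two universal quantifiers $(\forall b < |\sigma|)$ and $(\forall y)$ can be contracted into a single universal quantifier over pairs, and "$\sigma \in 2^{<M}$" is $\Delta^0_1$, hence $\Sigma^A_2$ (in fact $\Sigma^A_1$) and absorbable into the block. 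So prima facie the matrix is $\Pi^A_2$ already — the real content is only that this is genuinely a $\Pi^A_2$ formula in the model's sense, i.e. that the bounded quantifier $(\forall b<|\sigma|)$ does not interfere.

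First I would handle the bounded-quantifier contraction carefully: the formula
\[
(\forall b < |\sigma|)(\forall y)(\exists z)\,\theta^A(b,y,z)
\]
is, a priori, $\Pi^A_2$ only if we may pull the existential quantifier out past the bounded universal, which is exactly the point where $\BSt$ (available from $\ISt$ over $\RCA$, via Kirby--Paris) is needed — but here we are not pulling it out, we are keeping it inside, so the formula stays syntactically $\forall\forall\exists$, which collapses to $\forall\exists$ after pairing the two outer universals. Thus no bounding is formally required just to get the $\Pi^A_2$ bound on this particular rewriting; the role of $\ISt$ in the hypothesis is more likely invoked elsewhere (e.g.\ to guarantee that $\rt B$ interacts well with later constructions) or to ensure the equivalences in Lemma~\ref{lem:regular}-style arguments behave. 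I would state the rewriting, note that the prefix is $(\forall \langle b,y\rangle)(\exists z)$ modulo a $\Sigma^A_0$ decoding of the pair, and conclude directly.

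The step I expect to be the only mild obstacle is bookkeeping around the clause "$\sigma \in 2^{<M}$": one must check that being an $\mathcal M$-finite binary string is expressible with complexity low enough (it is $\Sigma^0_1$, since $\sigma$ is coded by an element of $\mathcal S$ with a maximum, and reading off its entries is $\Delta^0_1$) that conjoining it does not push the whole formula above $\Pi^A_2$. Since a $\Sigma^0_1$ conjunct can be merged into the existential layer of a $\Pi^A_2$ formula — absorb it under the $(\forall)(\exists)$ prefix by an extra harmless universal quantifier, or simply observe $\Sigma^0_1 \wedge \Pi^A_2$ is $\Pi^A_2$ — this causes no increase in complexity. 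I would close by remarking that the same argument shows $\rt B$ is $\Pi^A_2$ uniformly in a $\Pi^A_2$-index for $B$, which is what the later injury constructions will want.
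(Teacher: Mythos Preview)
Your argument is correct: the formula $(\forall b<|\sigma|)(\forall y)(\exists z)\,[\sigma(b)=0\to\theta^A(b,y,z)]$ is literally $\Pi^A_2$ after pairing the two universal quantifiers, and you are right that no bounding is needed for this step (bounded universal over $\Pi_n$ is always $\Pi_n$; it is bounded universal over $\Sigma_n$ that would require collection). Your observation that $\ISt$ is not actually used here is also correct.

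The paper's proof reaches the same conclusion by a slightly different route: rather than manipulating the prefix directly, it passes to the jump, noting that $\Pi^A_2 = \Pi^{A'}_1$, so that $\rt B$ is obtained from $B$ by a bounded universal quantifier over a $\Pi^{A'}_1$ matrix, which is trivially $\Pi^{A'}_1$ and hence $\Pi^A_2$. This buys brevity---the whole proof is four short sentences---at the cost of invoking the $\Pi^A_2 \leftrightarrow \Pi^{A'}_1$ correspondence. Your direct approach makes the quantifier bookkeeping explicit and, as you note, yields the uniformity in the $\Pi^A_2$-index that the later constructions use; the paper's jump argument gives the same uniformity, just less visibly.
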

 \begin{proof}
  Fix $A$ such that $B$ is $\Pi_2^A$. Then $B$ is $\Pi_1^{A'}$. Then clearly $\rt B$ is $\Pi_1^{A'}$. So $\rt B$ is $\Pi_2^A$.
 \end{proof}
 Next we present three new methods of representing families of functions under different assumptions; in each case, the $\Theta$ we are constructing can be thought of as a restricted and scrambled version of the input representation $\Psi$.

 \begin{lem}[$\Pi^0_2$ weak representation]\label{key-lemma}
	 Suppose $\mathcal M=(M,\mathcal S)$ is a model of $\RCA + \ISt$ and $\Psi$ is $\Sigma^0_1$. If~$B \subseteq M$~is $\Pi^0_2$, then the family 
	 $\{\Psi_e \colon  e \in B \tn{ and } \Psi_e \tn{ is total}\}$
	 is weakly represented.
 \end{lem}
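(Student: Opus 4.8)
The plan is to realize $\G = \{\Psi_e \colon e \in B \text{ and } \Psi_e \text{ total}\}$ as a weakly represented family by building a new $\Sigma^0_1$ representation $\Theta$ via a $0''$ infinite-injury argument, exactly as advertised in the preamble to this section. Since $B$ is $\Pi^0_2$, fix $A \in \mathcal S$ and a $\Delta^A_0$ formula so that $e \in B \iff (\forall s)(\exists t)[\psi^A(e,s,t)]$; equivalently, thinking of the approximation, $e \in B$ iff the ``$B$-module'' for $e$ is injured (reset) only finitely often, where we say $e$ is injured at stage $t+1$ if there is some $s \le t$ for which no witness $t' \le t$ with $\psi^A(e,s,t')$ has yet appeared. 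This is the standard way an infinite-injury priority construction sees a $\Pi^0_2$ fact. Using Lemma~\ref{lemma-universal} we may assume $\Psi = \Phi^A$ without loss of generality, so that the rows and the $B$-approximation are computed from the single oracle $A$.

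Next I would describe the module for a single index $e$. We devote an infinite ``column'' of fresh indices $\langle e, j\rangle$ (for $j \in M$) of $\Theta$ to tracking $e$; at any stage $t$ we have an active index $n(e,t)$ among these. While $e$ looks like it is in $B$ (no injury has occurred since $n(e,t)$ was activated) we copy $\Phi^A_{e}$ onto $\Theta_{n(e,t)}$, value by value, as those values converge. When $e$ is injured at stage $t+1$, we \emph{cancel} $\Theta_{n(e,t)}$ — we simply never put any further pair into that row, so it is partial and therefore ignored by Definition~\ref{dafFam} — and activate the next index $n(e,t+1) = \langle e, j+1\rangle$, starting a fresh copy of $\Phi^A_e$ from scratch. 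The key counting observation: if $e \notin B$ then $e$ is injured infinitely often, so every index $\langle e,j\rangle$ is eventually cancelled and no copy of $\Phi^A_e$ survives; if $e \in B$ then there is a last injury stage, after which the active index $n(e,\infty)$ is a faithful copy of $\Phi^A_e$ — total iff $\Phi^A_e$ is total — while all earlier indices in the column are partial. Hence $\{\Theta_m \colon \Theta_m \text{ total}\}$ is exactly $\{\Phi^A_e \colon e \in B \text{ and } \Phi^A_e \text{ total}\} = \G$, as required, and it remains only to check that $\Theta$ is $\Sigma^0_1$ — but its graph is defined by: $\langle m,x,y\rangle \in \Theta$ iff $m$ is active at some stage $s$ with no injury to its column between activation and $s$, and $\Phi^A_{e,s}(x)\da = y$, which is plainly $\Sigma^A_1$.

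The main obstacle is verifying that the construction actually runs inside $\mathcal M$ and produces an honest $\Sigma^0_1$ object rather than something whose very definition secretly needs more induction — this is precisely why the hypothesis $\ISt$ is present. Two points need care. First, ``$e$ has not been injured between stage $r$ and stage $s$'' must be evaluated; this is a $\Pi^A_1$-type condition uniformly in the parameters, and bounding/$\ISt$ is what lets us conclude that the stage function and the active-index function $n(e,\cdot)$ are well-behaved (e.g.\ that for a \emph{fixed} $e \in B$ the sequence of activations is $\mathcal M$-finite, via $\ISt$ applied to the least-injury-stage, which exists by $\Sigma^A_2$ induction). Second — and this is the subtlety flagged in the discussion after Definition~\ref{infinite-family} — when $e \in B$ we need the surviving row to be the \emph{only} total row in column $e$; the earlier rows are partial because they were explicitly frozen, which is fine, but one should note we are not claiming the family is indexed by an $\mathcal M$-finite set. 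Here Lemma~\ref{pi2-strings} (or rather its proof idea, passing to the oracle $A'$) can be invoked to organize the injury-tracking cleanly: over $A'$ the set $B$ is merely $\Pi^A_1$, so ``no injury after stage $r$'' becomes a $\Pi^{A'}_1$ search, and the whole module becomes a routine finite-injury-style construction relative to $A'$, which $\ISt = \ISo$ relativized to $A'$ is exactly strong enough to support. I expect the write-up to spend most of its length on this bookkeeping and on checking the $\Sigma^0_1$-definability of $\Theta$ goes through with the stated induction.
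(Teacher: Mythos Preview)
Your column-based approach is in the right spirit, but the specific definition of ``injury'' you give is wrong, and with it the construction fails. You claim that $e \in B$ iff $e$'s module is injured only finitely often, where $e$ is injured at stage $t+1$ whenever some $s \le t$ still lacks a witness $t' \le t$ with $\psi^A(e,s,t')$. But take $\psi^A(e,s,t)$ to hold iff $t = 2s$: then $e \in B$, yet at every stage $t+1 \ge 2$ the value $s = t$ has its only witness $2t > t$ still missing, so $e$ is injured at every stage. Under your construction every row in $e$'s column is therefore cancelled after boundedly many steps and stays partial, so $\Psi_e$ is \emph{never} placed into the represented family even when $e \in B$ and $\Psi_e$ is total. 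The underlying issue is that for a $\Pi^0_2$ condition the natural recursive dichotomy is ``infinitely many expansionary stages versus only finitely many,'' not ``finitely many resets versus cofinitely many''; what you wrote is the $\Sigma^0_2$ picture, and indeed your scheme would prove Lemma~\ref{key-lemma-2} rather than Lemma~\ref{key-lemma}.

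The paper handles this by organising the guesses as a tree rather than as independent columns: it takes a $\Delta^0_1$ sequence $(\sigma_s)_s$ that hits each $\sigma \in \rt B$ infinitely often, and at stage $2s$ resets only the nodes strictly to the \emph{right} of $\sigma_s$; a node that is an initial segment of $\overline B$'s characteristic function is then eventually never reset and is visited infinitely often, whereas nodes to the left are visited finitely often and nodes to the right are reset infinitely often. If you want to salvage the column picture, the cleanest fix is not to reset at all but to \emph{pace} the copying: let $m(e,r)$ be the largest $m \le r$ such that every $s < m$ has a witness $t < r$, and put $\Theta_e(x) = \Psi_e(x)$ only at stages $r$ with $m(e,r) > x$. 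Then $\Theta_e$ is total iff $m(e,r)$ is unbounded (equivalently $e \in B$) and $\Psi_e$ is total, which is exactly what is wanted. Your final paragraph's appeal to $A'$ is also off target: the construction of $\Theta$ must itself be $\Sigma^A_1$ and so cannot consult $A'$, and in any case ``no injury between stages $r$ and $s$'' is already a bounded-quantifier ($\Delta^A_0$) condition, not $\Pi^A_1$.
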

 \begin{proof}
	 Let $\rt B$ be as in Lemma \ref{pi2-strings}. Since $\rt B$ is $\Pi^0_2$, it has a $\Delta^0_1$ approximation $(\sigma_s)_{s \in M}$ satisfying $\sigma \in \rt B$ iff $(\exists^\infty s)[\sigma = \sigma_s]$. Begin the construction of a new $\Sigma^0_1$ set $\Theta$ by assigning to each $\sigma \in 2^{<M}$ a row $\Theta_\sigma$, while leaving $\mathcal M$-infinitely many rows of $\Theta$ unassigned. Then proceed by stages as follows:

\medskip

  At even stages $2s$, for each $\tau$ strictly to the right of $\sigma_s$, abandon its currently assigned row $\Theta_\tau$ and assign to it a new, fresh row (which now takes the name $\Theta_\tau$). Leave $\mathcal M$-infinitely many rows of $\Theta$ still unassigned.

\medskip

  At odd stage $2s+1$, for each $e < |\sigma_s|$ for which $\sigma_s(e) = 0$, make $\Theta_{\sigma_s \upharpoonright e}$ imitate $\Psi_e$ for $s$-many steps of computation. 

\medskip

	 This completes the construction. Clearly $\Theta$ is a partial function, and every total row of~$\Theta$ is the row $\Theta_{\sigma}$ eventually assigned to some $\sigma$ which is an initial segment of the complement $\overline{B}$'s characteristic function. And on the other hand, such a $\Theta_{\sigma}$ is total if and only if $\sigma(e) = 0$ (meaning $|\sigma|-1 \in B$) and $\Psi_{|\sigma|-1}$ is total. Hence $\Theta$ weakly represents the desired family of functions.
 \end{proof}

\begin{lem}[$\Sigma^0_2$ weak representation]\label{key-lemma-2}
	As in Lemma \ref{key-lemma}, but with $B$ being $\Sigma^0_2$ instead of $\Pi^0_2$.
\end{lem}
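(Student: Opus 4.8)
The plan is to mimic the construction in Lemma~\ref{key-lemma}, but replace the role of $\rt B$ (which we used to get a $\Pi^0_2$ object with a $\Delta^0_1$ true-path approximation) by an object adapted to the $\Sigma^0_2$ case. Write $B$ as $\Sigma^A_2$, so that $\overline B$ is $\Pi^A_2$, and hence $\Pi^A_1$ relative to $A'$; thus $\overline B$ (equivalently, $B$'s characteristic function read off from $0$) has a $\Delta^{A'}_1$, i.e.\ $\Delta^0_2$, approximation $(\beta_s)_{s \in M}$ with a true-path/liminf behaviour: for each $b$, the value $\beta_s(b)$ stabilises, and the stable value is the characteristic function of $\overline B$. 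Actually it is cleaner to do the dual of what Lemma~\ref{pi2-strings} did: since $B$ is $\Sigma^A_2$, the set $\rt(\overline B)$ of binary strings that are substrings of $B$'s characteristic function or lexicographically to its right is $\Sigma^A_2$ as well, hence $\Sigma^{A'}_1$, and so it has a $\Delta^0_1$ enumeration $(\sigma_s)_{s}$ with $\sigma \in \rt(\overline B)$ iff $(\exists s)[\sigma = \sigma_s]$ --- note the existential quantifier, not $\exists^\infty$, which is the only structural change from Lemma~\ref{key-lemma}.

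With that approximation in hand, I would run essentially the same staged construction. Assign to each $\sigma \in 2^{<M}$ a row $\Theta_\sigma$, keeping $\mathcal M$-infinitely many rows unassigned. At a stage where $\sigma_s$ appears, for every $\tau$ strictly to the right of $\sigma_s$ we abandon $\Theta_\tau$ and give it a fresh row (still leaving infinitely many unassigned); and separately, for each $e < |\sigma_s|$ with $\sigma_s(e) = 1$ --- meaning the guess is that $e \notin \overline B$, i.e.\ $e \in B$ --- we let $\Theta_{\sigma_s \restrict e}$ imitate $\Psi_e$ for $s$ more steps. (The bookkeeping is the mirror image of the $\Pi^0_2$ case because there the injury came from strings to the right becoming permanently correct, whereas here, since the approximation is a one-sided enumeration of $\rt(\overline B)$, a string $\sigma$ is a genuine initial segment of $B$'s characteristic function exactly when $\sigma$ is enumerated into $\rt(\overline B)$ but no proper extension to its left ever is --- one should double-check that the intended "true path" really is the one that is approximated here, adjusting the direction of the enumeration of $\rt$ if needed.) Using $\ISt$ exactly as in Lemma~\ref{key-lemma}, the surviving total rows of $\Theta$ are precisely the $\Theta_\sigma$ for $\sigma$ an initial segment of $B$'s characteristic function with last bit $1$ and $\Psi_{|\sigma|-1}$ total; so $\Theta$ weakly represents $\{\Psi_e \colon e \in B \text{ and } \Psi_e \text{ is total}\}$.

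The main obstacle is getting the combinatorics of the approximation exactly right: in the $\Pi^0_2$ case the $\exists^\infty$ clause made "the true path is visited infinitely often" automatic, and the even-stage injury was triggered by strings becoming permanently correct; in the $\Sigma^0_2$ case we instead have a $\Delta^0_1$ enumeration (each string, once listed, stays listed), and one must verify that the correct initial-segment string still ends up with a stable assigned row and that every incorrect string is injured cofinally often. Concretely, the point to get right is: a string $\sigma$ gets a permanent row iff it is $\leq_{\text{lex}}$ the characteristic function of $B$ and no shorter incompatible-and-more-left string is ever enumerated into $\rt(\overline B)$ --- and $\ISt$ is what guarantees that the construction does not leak infinitely many fresh rows onto a bounded set of strings, which is again the same subtlety flagged after Definition~\ref{infinite-family}. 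Everything else --- that $\Theta$ is $\Sigma^0_1$, that it is a partial function, that $\mathcal M$-infinitely many rows remain unassigned --- is verified verbatim as in Lemma~\ref{key-lemma}.
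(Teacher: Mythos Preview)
Your proposal overcomplicates matters and contains a complexity error that breaks the argument as written. You claim that $\rt(\overline B)$ --- the set of binary strings that are initial segments of $B$'s characteristic function or lexicographically to its right --- is $\Sigma^A_2$. It is not: membership in $\rt(\overline B)$ is $(\forall b < |\sigma|)[\sigma(b)=0 \to b \in \overline B]$, which is a bounded conjunction of $\Pi^A_2$ conditions and hence $\Pi^A_2$ (this is exactly Lemma~\ref{pi2-strings} applied to the $\Pi^0_2$ set $\overline B$). So the one-sided $\exists s$ enumeration you want does not exist for this set; what does exist is the $\exists^\infty$ approximation --- and that puts you right back in the setting of Lemma~\ref{key-lemma}.

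The paper's proof is accordingly a one-line reduction rather than a new construction: since $B$ is $\Sigma^0_2$, its complement $\overline B$ is $\Pi^0_2$, so run the proof of Lemma~\ref{key-lemma} verbatim with $\overline B$ in place of $B$. The only change is at the odd stages: the leftmost path through $\rt(\overline B)$ is now the characteristic function of $\overline{\overline B}=B$, so the condition ``$\sigma_s(e)=0$'' (which in Lemma~\ref{key-lemma} picked out $e\in B$) must be replaced by ``$\sigma_s(e)=1$'' to pick out $e\in B$ here. No new injury combinatorics, no new approximation scheme, no need to worry about the direction of the enumeration --- the $\exists^\infty$ mechanism and the even-stage injuries are unchanged. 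Your instinct to dualise was right, but the dualisation happens entirely inside the bit test, not in the structure of the approximation.
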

	This can be seen by almost the same proof as above, but using $\overline B$ as the $\Pi^0_2$ set and accordingly replacing ``$\sigma_s(e)=0$'' by ``$\sigma_s(e)=1$'' in the odd stages.

\begin{lem}[$\Pi^0_2$ dependent weak representation] \label{key-lemma-3}
	Suppose $\mathcal M=(M,\mathcal S)$ models ${\RCA + \ISt}$ and that $\phi$ is a~$\Pi^0_2$~predicate. Then 
	\[\{\Psi_{e_i}\colon  e_i \tn{ is least such that }\phi(\langle e_0,\ldots,e_{i-1}\rangle) \tn{ holds}\}\] is a weakly represented family.
\end{lem}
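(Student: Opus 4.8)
The target family is $\G = \{\Psi_{e_i} \colon e_i$ is least with $\phi(\langle e_0, \dots, e_{i-1}\rangle)$ holding$\}$, so the relevant index set is the sequence $\langle e_0, e_1, \dots \rangle$ defined by recursion: having fixed $e_0, \dots, e_{i-1}$, let $e_i$ be the least $e$ such that $\phi(\langle e_0, \dots, e_{i-1}, e\rangle)$ holds. (I read the displayed formula as shorthand for exactly this recursion, with $\phi$ applied to the extended sequence.) The plan is to reduce this to one of the two previous lemmas by isolating the correct set $B$ of indices and checking it has the right arithmetic complexity. Concretely, I would first argue that the sequence $\rho = \langle e_0, e_1, \dots\rangle \in M^{\le M}$ (possibly a proper cut, a priori) together with the set $B = \ran(\rho) = \{e_i \colon i < |\rho|\}$ can be analysed: the graph of the partial function $i \mapsto e_i$ is $\Pi^0_2$ relative to the parameter witnessing $\phi$, since ``$e_i = e$'' unfolds as ``$\phi(\langle e_0,\dots,e_{i-1},e\rangle)$ holds and $(\forall e' < e)[\neg\phi(\langle e_0,\dots,e_{i-1},e'\rangle)]$ and $e_0,\dots,e_{i-1}$ are the correctly defined earlier values'', and $\neg\phi$ is $\Sigma^0_2$ while $\phi$ is $\Pi^0_2$; under $\ISt$ the pointwise-defined recursion stays coherent long enough. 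From this one gets that $B$ itself is (at worst) $\Sigma^0_2$ — membership ``$e \in B$'' says ``$\exists i\ (e_i = e)$'' — so Lemma~\ref{key-lemma-2} ($\Sigma^0_2$ weak representation) applies to give that $\{\Psi_e \colon e \in B$ and $\Psi_e$ is total$\}$ is weakly represented, and this set equals $\G$ (every $e_i$ with $\Psi_{e_i}$ total contributes, the others are silently dropped, exactly as in Definition~\ref{dafFam}).

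The honest difficulty, and the reason a separate lemma is warranted rather than a one-line appeal, is that the recursion defining $e_i$ is \emph{not} obviously total over $\RCA + \ISt$: a priori the domain of $i \mapsto e_i$ could be a proper cut, or the recursion could reference values $e_j$ for $j$ beyond where coherence has been verified. So the main obstacle is a \emph{uniformity / induction} issue, not a coding issue: I need the approximation machinery of the previous proofs to be run on an index set that is being defined by a $\Pi^0_2$ recursion, and I must make sure the construction still produces total rows exactly on the genuine initial segment of $\rho$. I would handle this by mimicking the proof of Lemma~\ref{key-lemma} directly rather than quoting it as a black box: build $\Theta$ by stages where, at stage $s$, I use the current $\Delta^0_1$ approximations to the $\Pi^0_2$ facts to guess a finite initial segment $\langle \hat e_0, \dots, \hat e_{k}\rangle$ of the recursion (taking least witnesses at each coordinate with the currently-available evidence), assign fresh rows when the guessed segment changes below a coordinate, and have the stabilised row for coordinate $i$ imitate $\Psi_{\hat e_i}$. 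The $\ISt$ hypothesis is what guarantees the ``right'' value $e_i$ is eventually guessed and never later overturned, so the limiting assignment is correct; this is the same injury-bookkeeping as in Lemma~\ref{key-lemma}, now indexed by recursion-depth instead of by a string $\sigma \in 2^{<M}$.

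So the key steps, in order, are: (1) spell out the recursion that the displayed set notation abbreviates and observe that, coordinate by coordinate, ``$e_i = e$'' is $\Pi^0_2$ (least witness of a $\Pi^0_2$ predicate, with a $\Pi^0_2$ side condition that $e_0, \dots, e_{i-1}$ were themselves correctly chosen); (2) conclude the index set $B = \{e_i \colon i < |\rho|\}$ is $\Sigma^0_2$; (3) either invoke Lemma~\ref{key-lemma-2} on $B$, or — to sidestep the totality-of-the-recursion worry cleanly — rerun the stage construction of Lemma~\ref{key-lemma} with the $\sigma$-indexed rows replaced by recursion-depth-indexed rows, guessing least witnesses at each depth from a $\Delta^0_1$ approximation and reassigning fresh rows on lower-priority injury; (4) verify, using $\ISt$, that a row stabilises total precisely when its coordinate $i$ lies in the domain of the genuine recursion \emph{and} $\Psi_{e_i}$ is total, so $\Theta$ weakly represents $\G$. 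I expect step~(3)/(4) — getting the injury construction to respect the recursion and invoking $\ISt$ at the right place — to be where essentially all the content lies; steps~(1)--(2) are routine complexity bookkeeping of the kind already done in Lemma~\ref{pi2-strings}.
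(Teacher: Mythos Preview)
Your complexity bookkeeping in steps (1)--(2) does not go through, and this is the point where the black-box route via Lemma~\ref{key-lemma-2} collapses. The relation ``$e_i = e$'' is \emph{not} $\Pi^0_2$: it requires both $\phi(\langle e_0,\ldots,e_{i-1},e\rangle)$ (which is $\Pi^0_2$) \emph{and} the minimality clause $(\forall e'<e)\,\neg\phi(\langle e_0,\ldots,e_{i-1},e'\rangle)$ (which is $\Sigma^0_2$), together with the analogous mixed condition on each earlier coordinate. So already for a single coordinate the correctness predicate is a genuine conjunction of $\Pi^0_2$ and $\Sigma^0_2$, and after quantifying over the witnessing tuple $\tau$ the set $B=\{e_i\}$ comes out $\Sigma^0_3$, not $\Sigma^0_2$. (Even granting your claim that ``$e_i=e$'' were $\Pi^0_2$, the projection $\exists i\,(e_i=e)$ would be $\Sigma^0_3$, not $\Sigma^0_2$.) Hence Lemma~\ref{key-lemma-2} is not available.

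The paper avoids this complexity blow-up by a coding trick you did not hit on: encode a candidate tuple $\langle e_0,\ldots,e_k\rangle$ as the binary string $1^{e_0}0\,1^{e_1}0\cdots 1^{e_k}0$, and let $T$ be the set of $\sigma\in 2^{<M}$ whose decoded tuple satisfies $\phi$. Then $T$ is a $\Pi^0_2$ \emph{tree}, and the crucial observation is that the desired sequence $\langle e_0,e_1,\ldots\rangle$ is precisely the \emph{leftmost path} of $T$---the minimality conditions are absorbed into ``leftmost'' rather than being carried as explicit $\Sigma^0_2$ conjuncts. This is exactly the situation Lemma~\ref{key-lemma} was built for (there, $\rt B$ is a $\Pi^0_2$ tree and $\overline B$ is its leftmost path), so the same stagewise construction applies verbatim with $T$ in place of $\rt B$ and with $\Theta_\sigma$ imitating $\Psi_{e_k^\sigma}$. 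Your fallback plan (3)/(4) of ``recursion-depth-indexed rows with guessed least witnesses'' is morally in this direction, but without the binary-string encoding you have no single $\Pi^0_2$ object whose approximation drives the injuries, and the verification that the guesses stabilise within the $\ISt$ budget becomes the whole problem rather than a corollary of Lemma~\ref{key-lemma}.
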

 \noindent
 In other words: $e_0$ is least such that $\phi(\langle e_0\rangle)$ holds, then $e_1$ is least such that $\phi(\langle e_0,e_1\rangle$) holds, and so on. It is possible that $e_i$ is defined for all $i \in M$; it is possible that there is no $i$ at all, or a maximum $i$ for which $e_i$ is defined; and it is also possible that the set of $i$ forms a proper cut.

 \begin{proof}
  Associate to each $\sigma \in 2^{<M}$ the sequence $\langle e_0^\sigma,\ldots,e_k^\sigma\rangle \in M^{<M}$ where
  \begin{itemize}
  	\item $e_0^\sigma$ is the number of $1$'s occurring before the first $0$ in $\sigma$, and 
   \item $e_{i+1}^\sigma$ is the number of $1$'s occurring between the $(i+1)$-th and the $(i+2)$-th $0$ in $\sigma$; and
   \item for each $\sigma$, we use $k$ to mean the largest number for which $e_k^\sigma$ is defined.
  \end{itemize}
   Then the set of $\sigma$ for which $\mathcal M \models \phi(\langle e_0^\sigma,\ldots,e_k^\sigma\rangle)$ holds is closed under initial segment --- in other words, it forms a binary tree --- it is $\Pi^0_2$ definable, and its leftmost infinite path $\sigma$ corresponds to the sequence $\langle e_0,e_1,\ldots\rangle$ described in the lemma's statement. By applying the argument of Lemma \ref{key-lemma} to this tree and its $e_i$, we obtain the required $\Theta$.
 \end{proof}

\section{Maximal almost disjoint families of sets}\label{section-mad}

Two sets $A, B \subseteq M$ are said to be \emph{almost disjoint} if $A \cap B$ is $\mathcal M$-finite.
A family  $\F$ of subsets of $M$ is  \emph{almost disjoint} if any two distinct elements of $\F$ are almost disjoint.
 $\F$ is called \emph{maximal almost disjoint} if it is non-finite, almost disjoint and  not properly contained in any almost disjoint family.

\begin{statement}[Maximal Almost Disjoint Family principle, \MAD] 
	There is a non-finite, weakly represented family  of infinite sets 
	that is maximal almost disjoint.
\end{statement}

H\"{o}lzl, Raghavan, Stephan and Zhang \cite{HRSZ17} showed that $\MAD$ is equivalent to $\neg \DOM$ in $\omega$-models. We extend this result to  all models of $\RCA+ \ISt$.

\begin{mythm}\label{dom-iff-not-mad}
 $\RCA + \ISt \vdash \DOM \leftrightarrow \neg \MAD$.
\end{mythm}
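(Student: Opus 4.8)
\textbf{Proof plan for Theorem~\ref{dom-iff-not-mad}.}

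The plan is to prove the two directions separately, leveraging Martin's Theorem (Proposition~\ref{martins-theorem}) and the index-set machinery from Section~\ref{section-key}.

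For the direction $\DOM \Rightarrow \neg\MAD$: assume $\DOM$ holds and suppose toward a contradiction that $\F$ is a non-finite weakly represented MAD family, say weakly represented by some $\Psi$ with $\F \subseteq \F^A_{\Tot}$. The idea is to produce a single set $C$ almost disjoint from everything in $\F$, contradicting maximality. First I would observe that, since $\DOM$ holds, by Proposition~\ref{martins-theorem} we may pass to a $B$ such that every $\Sigma^A_2$ set is $\Delta^B_2$; in particular the predicate ``$e$ indexes a total row of $\Psi$ and $\Psi_e \in \F$'' and the various combinatorial facts we need about $\F$ become low enough to manipulate. The standard $\omega$-model construction of a set almost disjoint from a dominated MAD family (as in \cite{HRSZ17}) is a finite-extension argument where at stage $n$ we decide membership of finitely many numbers in $C$ so as to make $C \cap \Psi_{e_n}$ finite, using domination to bound the search; I would formalise this construction inside $\RCA + \ISt$, noting that $\ISt$ is exactly what is needed to run the stage-by-stage argument and verify that the resulting $C$ is genuinely almost disjoint from \emph{every} total row (not merely from each standard initial segment of them). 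The output is a set $C$ witnessing that $\F \cup \{C\}$ is almost disjoint and properly extends $\F$; but we must also check $C$ is infinite and not almost-equal to an existing member, which is arranged by a minor bookkeeping step in the construction.

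For the direction $\neg\MAD \Rightarrow \DOM$: this is the contrapositive $\neg\DOM \Rightarrow \MAD$, so assume $\DOM$ fails and fix (via Proposition~\ref{martins-theorem}, failure of (iii)) an $A$ and a $\Sigma^A_2$ set that is not $\Delta^B_2$ for any $B$ --- equivalently, $A'$ is not dominated in the relevant sense, so there is a $\Pi^A_2$-definable scale of functions with no dominating function. The plan is to build a weakly represented MAD family from this. The natural template is the classical construction of a MAD family from an unbounded family of functions: associate to suitable almost-disjoint ``blocks'' of $M$ a family of branches, and use the failure of domination to guarantee maximality --- any candidate set $C$ that would extend the family would have to grow faster than everything in a certain $\Pi^A_2$ family, which the failure of $\DOM$ forbids. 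Concretely I expect to use Lemma~\ref{key-lemma} (or Lemma~\ref{key-lemma-3}) to obtain the required weakly represented family: one defines the desired family by a $\Pi^0_2$ index condition and invokes the $0''$-tree argument to get an honest $\Sigma^0_1$ representation $\Theta$ inside $\RCA + \ISt$. Checking non-finiteness of the resulting family, and checking that maximality really follows from $\neg\DOM$ (and not from some stronger hypothesis), are the delicate points here.

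The main obstacle I anticipate is the $\neg\DOM \Rightarrow \MAD$ direction, specifically ensuring that the constructed family is \emph{weakly represented} in the precise sense of Definition~\ref{dafFam} while simultaneously being MAD; the tension is that the most natural family witnessing maximality is defined by a $\Pi^0_2$ condition on indices, which is too complex to restrict $\Psi$ to directly --- this is exactly the difficulty Section~\ref{section-key} was built to handle, so the real work is in checking that the hypotheses of Lemma~\ref{key-lemma} (or its variants) apply and that the scrambled $\Theta$ still has the almost-disjointness and maximality properties intact. A secondary subtlety, relevant to both directions, is that the notion of \emph{non-finite} (Definition~\ref{infinite-family}) rather than a naive notion of infinite is what makes the equivalence go through, and the proof will need to invoke $\ISt$ to pin down when an index set is $\mathcal M$-finite; I would keep careful track of where $\ISt$ is used, since the paper's broader theme is that induction strength is the essential parameter.
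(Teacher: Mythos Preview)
Your plan for $\DOM \Rightarrow \neg\MAD$ matches the paper's: use Proposition~\ref{martins-theorem} to make the index set $\{e : \Psi^A_e$ is total and codes an $\mathcal M$-infinite set$\}$ into a $\Delta^f_2$ set, then build the diagonalising $B$ by a $\Sigma^f_1$ search against a $\Delta^f_1$ approximation to that index set. One point you leave implicit is that the resulting $B$ is automatically $\mathcal M$-infinite and not in $\F$ (since $\F$ is almost disjoint and $B$ meets each member only finitely), so no extra bookkeeping is needed there.

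For the other direction you go via the contrapositive $\neg\DOM \Rightarrow \MAD$, whereas the paper argues $\neg\MAD \Rightarrow \DOM$ directly. The underlying construction is the same, but the paper's framing is cleaner and your sketch skips the one genuinely nontrivial idea. The paper fixes $A$, and using Lemma~\ref{key-lemma-3} (not Lemma~\ref{key-lemma} --- the choice of each $e_{i+1}$ depends on $e_0,\dots,e_i$) builds a weakly represented \emph{partition} $\{C_i\}$ of $M$ by a $\Pi^0_2$ dependent selection whose crucial clause is: for every $e<i$ and every $x$, either $\Phi^A_e$ is partial or $\Phi^A_e(x)$ is below the $x$-th element of $\overline{D_i} = \overline{C_0 \cup \cdots \cup C_{i-1}}$. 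This clause is what forces any $B$ almost disjoint from the partition to have a principal function $p_B$ dominating $\F^A_{\Tot}$, and it is exactly the ``blocks'' mechanism you gesture at but do not specify. In the paper's direct framing one then simply invokes $\neg\MAD$ to obtain such a $B$ and concludes $\DOM$; in your contrapositive framing you would additionally have to verify that the partition is non-finite in the sense of Definition~\ref{infinite-family} before you can call it a MAD family, a step the direct argument sidesteps. Either route works, but you should make the domination clause explicit --- without it the plan is a shape, not a proof.
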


\begin{proof}
($\DOM \rightarrow \neg \MAD$) Fix a model $\mathcal M = (M, \mathcal S) \models \RCA + \ISt + \DOM$, and any non-finite almost disjoint family $\F$ which is weakly represented, say by $\Psi^A$ for some $A \in \mathcal S$ (note that since  $\ISt$ holds in the model,  $\F$ is in fact $\mathcal M$-infinite). The set
 \[ E=\{e \colon  \Psi^A_e~\mbox{is total and (the characteristic function of) an $M$-infinite set}\}\]
 is $\Pi^A_2$, and hence by $\DOM$ and Proposition \ref{martins-theorem} it is $\Delta^f_2$ for some $f \in \mathcal S$. Let $(E_s)_{s \in M}$ be a $\Delta^f_1$ approximation to~$E$.

Define a new set $B = \{b_0 < b_1 < \ldots\}$ as follows. Let $b_0$ be any number. To define $b_{k+1}$, search for a pair~${x,s > b_k}$ satisfying
  \[(\forall e<k)[\tn{either} ~\Psi_{e,s}^A(x){\downarrow} = 0~\tn{or}~e \not \in E_s],\]
	and let $b_{k+1}$ be this $x$. Since $B$ is being enumerated in increasing order and in a $\Sigma^f_1$ way, we know by~$\ISo$ that $B \in \mathcal S$ and (using the definition of $E$) that $b_k$ is defined for every~$k \in M$. And since $(E_s)_{s \in M}$ reaches a pointwise limit, $\mathcal M \models (\forall e)(\forall^\infty k)[b_k \not\in \Psi_e^A]$. Hence, if we let $\Theta$ equal $\Psi$ prepended with a row for $B$'s characteristic function, we see that $\F \cup \{B\}$ is a weakly represented family strictly larger than $\F$. Now fix any $C \in \F$; we claim that $B$ and $C$ are almost disjoint. Suppose for a contradiction that $B \cap C$ were $\mathcal M$-infinite, and let $\Psi^A_e$ be $C$'s characteristic function. Then from the definition of $b_k$ we know that $e \not \in E_s$ for unboundedly many $s$, and so in the limit we have $e \not \in E$. But this contradicts the definition of $E$. Therefore $\F$ is contained in the strictly larger almost disjoint family $\F \cup \{B\}$, and in particular $\F$ is not a MAD family. Since $\F$ was arbitrary, we conclude $\neg \MAD$ holds.

\medskip

\noindent ($\neg \MAD \rightarrow \DOM$)
	Fix any model $\mathcal M = (M,\mathcal S) \models \RCA + \ISt + \neg \MAD$, fix any $A \in\mathcal S$, and let $\Phi^A$ be universal as in Lemma \ref{lemma-universal}. 
 Consider the formula $\psi$ which takes a tuple $\langle e_0,\ldots,e_k\rangle$ of indices as input and expresses that
 \begin{itemize}
  \item[(a)] each $\Phi^A_{e_i}$ is the characteristic function of an $\mathcal M$-infinite set --- call it $C_i$;
  \item[(b)] these $C_i$'s are pairwise disjoint;
  \item[(c)] the union $D_k = \bigcup_{i < k}C_i$ is $\mathcal M$-coinfinite; and
  \item[(d)] for all $x$ and $e<i<k$, either $\Phi^A_e$ is not total or $\Phi^A_e(x)$ is less than the $x$-th smallest element of $ \overline{ D_i}$.
 \end{itemize}
	These statements are $\Pi^0_2$; for (a)--(c) that is straightforward; for (d) consider the formula 
	\[(\forall x)(\forall i < k)(\forall s)(\exists t>s)(\exists y)(\forall z<y)(\forall e< i)[z \in D_{i,t} \wedge (\Phi^A_{e,t}(x)\da<y \vee e \in \Tot^A_t)],\] 
	where $(\Tot^A_t)_t$ is the usual $A$-computable approximation to $\Tot^A= \{e\in M \colon  \Phi_e^A$ is total$\}$.

 Thus, applying Lemma~\ref{key-lemma-3}, we obtain some $\Theta$ weakly representing
  \[\F = \{\Phi^A_{e_i} \colon  e_i \in \langle e_0,e_1,\ldots\rangle\},\]
where $\langle e_0, e_1, \ldots\rangle$ is lexicographically least satisfying both $\psi$ and the relation $e_0 < e_1 < \cdots$.

  Now $\F$ forms a partition of $M$ into $\mathcal{M}$-infinite sets, and hence is an almost disjoint family. Since ${\mathcal M \models \neg \MAD}$, there is a non-finite $B \in\mathcal S$ almost disjoint from each $C_i$. By $\BSt$ this $B$ is almost disjoint from each $D_i$, meaning all but $\mathcal M$-finitely much of $B$ is contained in each complement $\overline{ D_i}$. Hence the function $p_B$ which maps $x$ to the $x$-th element of $B$ dominates all total $\Phi_e^A$ for $e < i$. By allowing~$i$ to vary, we see that $p_B$ dominates all total $\Delta^A_1$ functions. Since $A \in\mathcal S$ was arbitrary, we conclude that $\DOM$~holds.
\end{proof}
 An alternate proof of $\neg \MAD \rightarrow \DOM$ might construct the partition directly using a $0''$ tree argument similar to the proof of Lemma \ref{key-lemma}.
 
\goodbreak

\begin{prop}\label{prop:mad-no-ist}
 $\RCA + \BSt + \neg \ISt \vdash \MAD$.
\end{prop}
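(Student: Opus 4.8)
The plan is to exploit the failure of $\ISt$ to produce, in a fairly soft way, a maximal almost disjoint family by ``brute force.'' The key observation is that under $\neg\ISt$ there is a $\Sigma^0_2$ (in fact $\Pi^0_2$, via a parameter $A$) set $I \subseteq M$ which is an initial segment of $M$ that is not $\mathcal M$-finite: namely the set $\{e : \Phi^A_e \text{ is total}\}$ need not be regular, and more to the point there is a proper $\Sigma^0_2$ cut. Since we only need the family to be \emph{non-finite} in the sense of Definition~\ref{infinite-family}, and since $\BSt$ holds so that the relevant index sets behave reasonably, I expect to build the MAD family essentially as ``the family of all $\mathcal M$-infinite sets that appear as rows of the universal machine, suitably pruned so that pairwise almost-disjointness is maintained,'' and argue that the pruned index set, while it is not $\mathcal M$-finite (so the family is non-finite), is nonetheless $\Pi^0_2$ so that Lemma~\ref{key-lemma} applies.

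Concretely, first I would fix $A \in \mathcal S$ topping enough of the model, and let $\Phi^A$ be universal. I would define a $\Pi^0_2$ predicate $\phi$ on tuples $\langle e_0,\ldots,e_k\rangle$ expressing: each $\Phi^A_{e_i}$ is the characteristic function of an $\mathcal M$-infinite set, these sets are pairwise almost disjoint, and each $e_{i+1}$ is the \emph{least} index (above a suitable bound) adding a new infinite set almost disjoint from all previous ones and not already covered. Then by Lemma~\ref{key-lemma-3} the family $\F = \{\Phi^A_{e_i}\}$ is weakly represented. By construction $\F$ is almost disjoint. The point of the greedy ``least index'' choice is maximality: any $\mathcal M$-infinite $B \in \mathcal S$ is $\Phi^A_e$ for some $e$ (since $\mathcal S$ is topped by $A$... or more carefully, for \emph{some} oracle in $\mathcal S$, so one needs to iterate over a cofinal family of oracles, or fix $A$ to top $\mathcal S$ if the model is countable — but conservation-style arguments let us assume this), and if $B$ were almost disjoint from every member of $\F$ it would have been selected at some stage, contradiction.

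The subtle point — and the main obstacle — is why $\F$ is \textbf{non-finite}. This is exactly where $\neg\ISt$ is used: the greedy process defining the $e_i$'s runs along a proper cut rather than terminating at an $\mathcal M$-finite stage, precisely because $\{e : \Phi^A_e \text{ is total and infinite}\}$ is $\Pi^0_2$ but, by $\neg\ISt$ and Lemma~\ref{lem:regular}(1), fails to be regular; hence the index set $\{e_0, e_1, \ldots\}$ cannot be enclosed in any $\mathcal M$-finite $E$, so by Definition~\ref{infinite-family} the family is non-finite. One must check the process does not simply halt: I would arrange that the enumeration of candidate infinite almost-disjoint rows never exhausts (using $\BSt$ to guarantee that finite unions of the selected sets remain coinfinite, much as clause (c)–(d) in the proof of Theorem~\ref{dom-iff-not-mad} do), so that the only way for the selection to stop is by running out of an initial segment — which under $\neg\ISt$ it does, but as a cut, not a finite set. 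The remaining verifications — that the selected rows really are $\mathcal M$-infinite, that pairwise intersections are $\mathcal M$-finite, and that $\BSt$ suffices to push almost-disjointness through finite unions — are routine and parallel to the bookkeeping in the proof of Theorem~\ref{dom-iff-not-mad}.
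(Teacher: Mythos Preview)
Your plan has a fatal gap: Lemma~\ref{key-lemma-3} (and likewise Lemma~\ref{key-lemma}) is stated and proved only for models of $\RCA + \ISt$, while here you are working under $\neg\ISt$. The entire machinery of Section~\ref{section-key} is unavailable in this setting, so you cannot use it to weakly represent your greedy family. A second problem is maximality: your greedy process only sweeps through indices of $\Phi^A$ for a fixed $A$, so it will miss any infinite $B \in \mathcal S$ not computable from $A$; your parenthetical suggestion to assume the model is topped is not justified, since the proposition is a provability claim and must hold in every model. Finally, your non-finiteness argument conflates one particular representation with the quantifier in Definition~\ref{infinite-family}: to show $\F$ is non-finite you must rule out \emph{any} $\Psi$ and $\mathcal M$-finite $E$ with $\F = \{\Psi_e : e \in E\}$, not just the one produced by your construction.

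The paper's argument avoids all of these issues by a much more direct route. From $\neg\ISt$ one gets a $\Pi^A_2$ set $B \subseteq [0,a)$ that is bounded but not $\mathcal M$-finite; one then builds a \emph{partition} $C_0,\ldots,C_{a-1}$ of $M$ into $a$-many $\Delta^A_1$ pieces so that $C_i$ is $\mathcal M$-infinite iff $i \in B$, and takes $\F = \{C_i : i \in B\}$. Weak representation is immediate (no Section~\ref{section-key} lemmas needed). Maximality comes from the partition property together with a single application of $\BSt$: any $D \in \mathcal S$ almost disjoint from each $C_i$ with $i \in B$ is almost disjoint from every $C_i$ with $i<a$, and swapping the bounded quantifier gives that $D$ is $\mathcal M$-finite. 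Non-finiteness is argued by showing that any finite listing of $\F$ would let one compute $B$ as an $\mathcal M$-finite set. None of this needs the model to be topped.
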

\begin{proof}
	Fix a model $\mathcal M = (M, \mathcal S)$ and let $A \in \mathcal S$ be such that there is a $\Pi^A_2$ set $B$ which is bounded from above but  not $\mathcal M$-finite; such $A,B$ are guaranteed to exist by $\neg \ISt$ and Lemma~\ref{lem:regular}. It is routine to use the $\Pi^A_2$ formula defining $B$   and its upper bound $a \in M$ to define a partition $ C_0,\ldots,C_{a-1} \in \mathcal{S}$ of $M$ such that $C_i$ is $\mathcal M$-infinite iff $i \in B$. Let $\Psi^A \colon  M^2 \rightarrow M$ be the $\Sigma^A_1$ partial function
	\[\Psi^A(i,s)=\begin{cases}
		1 &\tn{if } i < a\tn{ and } s \in C_i, \\
		0 &\tn{if } i < a\tn{ and } s \not \in C_i\tn{ and }(\exists s>t)[t \in C_i], \\
		\ua &\tn{otherwise.} \\
	\end{cases}\]
Then $\Psi^A$ weakly represents the family $\F = \{C_i \colon  i \in B\}$. Since the $C_i$ form a partition, $\F$~is almost disjoint; we claim that $\F$ is MAD. To see this, suppose for a contradiction that some $D\in \mathcal S$ is $\mathcal M$-infinite, but ${D \cap C_i}$~is $\mathcal M$-finite for every $i \in B$. Then $D \cap C_i$ is $\mathcal M$-finite for every $i < a$. That is, we have 
\[(\forall i < a)(\exists s)(\forall t > s)[t \not \in C_i \cap D].\]
Using $\BSt$ to exchange the two leading quantifiers yields $(\exists s)(\forall i < a)(\forall t > s)[t \not \in C_i \cap D]$. Since the $C_i$ form a partition, this implies that $D$ is $\mathcal M$-finite, a contradiction.

	To complete the argument, we claim that $\F$ is a non-finite family. Assume for a contradiction that it is finite, that is, that there exists some other $\Theta$ weakly representing $\F$ such that there is an $\mathcal M$-finite set~$E$ with $\F = \{\Theta_e \colon  e \in E\}$. Then, for each $e \in E$, we can find one $w_e \in \Theta_e$ and using $A$ as an oracle the unique~$i$ such that $w_e \in C_i$. But this gives an effective procedure for determining which $i<a$ are elements of $B$. Thus $B$ would be $\mathcal M$-finite, contradicting our assumption.
\end{proof}
As a corollary, we obtain an extension of Theorem~\ref{dom-iff-not-mad}.
\begin{cor}
	$\RCA + \BSt \vdash \DOM \leftrightarrow \neg \MAD$.
\end{cor}
\begin{proof}
As $\RCA+\BSt+\DOM \vdash \ISt$, no model of $\RCA+\BSt+\neg\ISt$ satisfies $\DOM$. But by Theorem~\ref{dom-iff-not-mad} each such model satisfies~$\MAD$. 
\end{proof}

\section{Avoidance and eventual difference}\label{section-med}

Two functions are {\em eventually different} if there is an upper bound on the inputs where they are equal; they are {\em eventually equal} if there is an upper bound on the inputs where they are different.

\begin{statement}[Avoidance principle, \AVOID] Given any weakly
	represented family of 
	functions $\mathcal{F}$, there exists a function $g $ that is eventually different from all $f\in \mathcal{F}$.
\end{statement}

\begin{statement}[Maximal Eventually Different Family principle, \MED]
	There is a weakly represented family $\F$ of functions such that
	\begin{itemize}
		\item any two functions in $\F$ are
                  either equal or eventually different, and
		\item every function coincides
		  infinitely often with some function in $\F$.
	\end{itemize}
\end{statement}
	
 \begin{mythm}
  $\RCA \vdash \MED \rightarrow \neg \AVOID$.
 \end{mythm}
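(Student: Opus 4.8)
The plan is to observe that the MED family supplied by $\MED$ is \emph{itself} a witness for $\neg\AVOID$; no new family needs to be constructed, and the entire content is the translation between the two dual phrasings ``eventually different'' and ``infinitely often equal.''

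Concretely, fix a model $\mathcal{M} = (M,\mathcal{S}) \models \RCA + \MED$ and let $\F$ be a weakly represented family of functions witnessing $\MED$, so that (a) any two members of $\F$ are equal or eventually different, and (b) every function $M \to M$ coincides infinitely often with some member of $\F$. Clause~(b) forces $\F \neq \emptyset$, so $\F$ is a genuine weakly represented family of functions and hence a legitimate input to $\AVOID$; we use only clause~(b). Suppose for contradiction that $\AVOID$ holds, and apply it to $\F$ to obtain a function $g \in \mathcal{S}$ that is eventually different from every $f \in \F$. By the definition of ``eventually different,'' the set $\{x \colon g(x) = f(x)\}$ is bounded for each $f \in \F$. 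Each such set belongs to $\mathcal{S}$, being $\Delta^0_1$ in $g \oplus f$ with $g,f \in \mathcal{S}$; so over $\RCA$ it is $\mathcal{M}$-finite exactly when it is bounded, and its negation says precisely that $g$ coincides with $f$ infinitely often. Thus $g$ coincides infinitely often with \emph{no} member of $\F$, contradicting clause~(b) applied to the function $g$. Hence $\AVOID$ fails, i.e.\ $\mathcal{M} \models \neg\AVOID$.

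I expect no real obstacle. The one point needing a word of care is that the equivalence ``not eventually different $\Longleftrightarrow$ coincides infinitely often'' is available in $\RCA$; it is, since the coincidence sets in question are $\Delta^0_1$ in parameters from $\mathcal{S}$, hence regular, so that ``bounded'' and ``$\mathcal{M}$-finite'' agree for them. (One can even sidestep regularity entirely: ``eventually different'' is defined through boundedness of the equality set and ``coincides infinitely often'' through its unboundedness, so the two notions are complementary essentially by definition, and the proof above goes through verbatim.)
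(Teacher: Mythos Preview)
Your proof is correct and is precisely the paper's approach, just written out in more detail: the paper's entire proof is the single line ``Clearly, any family witnessing $\MED$ is a counterexample to $\AVOID$.'' Your care about the equivalence between ``not eventually different'' and ``coincides infinitely often'' is appropriate but, as you note yourself at the end, unnecessary here since the two conditions are complementary by definition.
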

 \begin{proof}
Clearly, any family witnessing $\MED$ is a counterexample to $\AVOID$.
\end{proof}

The following proposition is a reformulation of a previously known result
in the terminology of reverse mathematics; we include our own proof
since as far as we know it is more direct than others that have appeared
in print.
 \begin{prop}[%
 	Kjos-Hanssen, Merkle, Stephan \cite{KMS}] $\RCA \vdash (\DOM \vee \DNR) \leftrightarrow \AVOID$.
 \end{prop}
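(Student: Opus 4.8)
The plan is to prove the equivalence $(\DOM \vee \DNR) \leftrightarrow \AVOID$ over $\RCA$ by handling each direction separately, and for the forward direction splitting into the two disjuncts.

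\emph{The direction $\AVOID \rightarrow (\DOM \vee \DNR)$.} I would argue the contrapositive, or rather argue directly: assume $\AVOID$ and suppose $\neg\DNR$; I want to derive $\DOM$. The key observation is that $\neg\DNR$ means that for every $A \in \mathcal S$ there is no function avoiding $\lambda e.\Phi^A_e(e)$ everywhere; by a standard argument this is equivalent (over $\RCA$) to saying that for every $A$ there is a $\{0,1\}$-valued DNR function relative to $A$, or that $A'$ is computable from something weak — more to the point, $\neg\DNR$ gives us that WKL-style diagonalization fails, and this lets us compute, for any $A$, an enumeration that pins down $\Tot^A$ sufficiently. Concretely: given a weakly represented family $\F \subseteq \F^A_{\Tot}$, I want $g$ dominating all total $\Phi^A_e$. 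Apply $\AVOID$ to the family $\{h_e\}$ where $h_e$ is designed so that being eventually different from all $h_e$ forces domination. The standard trick (this is the Kjos-Hanssen--Merkle--Stephan idea) is: from a function $g$ that is eventually different from a cleverly chosen weakly represented family, together with the failure of $\DNR$, one can compute a dominating function; the failure of $\DNR$ is what lets you convert "eventually different" into "eventually above". I expect to spell this out by noting that if $g$ is ED from the family of all functions $f$ such that $f(x) = \langle \Phi^A_{e,x}(x) : e < x\rangle$-coded-values (a weakly represented family), then $g$ together with an oracle deciding $\DNR$-failure suffices.

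\emph{The direction $\DOM \rightarrow \AVOID$.} Given a weakly represented family $\F$ of functions, by Lemma~\ref{lemma-universal} we have $\F \subseteq \F^A_{\Tot}$ for some $A$. Apply $\DOM$ (equivalently $\HIGH$, via Proposition~\ref{martins-theorem}) to get $g$ dominating all total $\Delta^A_1$ functions; in particular, using the $\HIGH$ characterization, $\Tot^A$ is $\Delta^B_2$ for some $B$, hence has a $\Delta^B_1$ approximation. Now for each $e$ and each $x$, let $v(e,x)$ be the value $\Phi^A_e(x)$ if it converges within $g(x)$ steps, else $0$; using the approximation to $\Tot^A$ one builds, in a $B$-computable way (hence in $\mathcal S$), a single function $h$ such that $h(x) \neq v(e,x)$ for all $e < x$ — possible since there are fewer than $x$ values to avoid and the range is unbounded — and such that for total $\Phi^A_e$, eventually $\Phi^A_e(x)$ does converge within $g(x)$ steps, so $v(e,x) = \Phi^A_e(x)$ and thus $h$ is eventually different from $\Phi^A_e$. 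This gives $\AVOID$.

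\emph{The direction $\DNR \rightarrow \AVOID$.} This is the cleanest: a DNR function relative to a sufficiently strong oracle can be turned into an eventually different function. Given $\F \subseteq \F^A_{\Tot}$, consider for each $\mathcal M$-finite string $\sigma$ the index $d(\sigma)$ of a machine that, on any input, searches for some $e < |\sigma|$ and some $x$ with $\Phi^A_e \restriction x = \sigma \restriction x$ but $\Phi^A_e(x) = \sigma(x)$, and if found outputs (a code for) that — the usual "diagonal against finite conditions" construction. A $\DNR$ function relative to $A$ (which $\DNR$ provides, at least relative to any $A \in \mathcal S$, since $\DNR$ as stated asserts a hyperimmune — wait, no: $\DNR$ asserts existence of a diagonally non-recursive degree; one uses that relativization of $\DNR$ to $A$ still follows, or re-examines: since $\RCA$ proves $\DNR$ is preserved under $\oplus$ with elements of $\mathcal S$) yields, by the standard block-coding argument (break $\omega$ into consecutive blocks of increasing length and on each block encode a long finite string of potential values), a function eventually different from every total $\Phi^A_e$.

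\textbf{Main obstacle.} The hard part will be the direction $\AVOID \rightarrow (\DOM \vee \DNR)$: one must show that if $\DNR$ fails then every weakly represented family can still be dominated using only an ED-function for some auxiliary family. The subtlety is purely reverse-mathematical — making sure all the objects constructed (the approximation to $\Tot^A$, the dominating function, the auxiliary families fed into $\AVOID$) are legitimately in $\mathcal S$ or are genuinely weakly represented, using only $\RCA$ and no extra induction. In particular one must verify that the failure of $\DNR$ can be leveraged uniformly in the oracle $A$, i.e.\ that $\neg\DNR$ relativizes appropriately within a model of $\RCA$; I would handle this by recalling the standard equivalence (provable in $\RCA$) between $\neg\DNR$ and the statement that for every $A$ there is no function $\{0,1\}$-valued and $A$-DNR, and then running the KMS argument inside the model.
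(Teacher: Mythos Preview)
Your forward implications are workable but overcomplicated, and your converse direction has a genuine gap.

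For $\DOM \rightarrow \AVOID$ you invoke Proposition~\ref{martins-theorem}, build an approximation to $\Tot^A$, and then diagonalize. None of this is needed: if $g$ dominates $f$ then $g(x) > f(x)$ for all large $x$, hence $g(x) \neq f(x)$ for all large $x$. Domination \emph{is} eventual difference, immediately. For $\DNR \rightarrow \AVOID$ your block-coding sketch is plausible but vague; the paper instead uses a direct prime-factorization trick: given an $A$-DNR function $f$, for each $e$ and each $j \le e$ pick (via the recursion/$s$-$m$-$n$ theorem) an index $i_j$ with $\Phi^A_{i_j}(i_j)$ equal to the exponent of $p_j$ in $\Phi^A_j(e)$, and set $g(e) = \prod_{j\le e} p_j^{f(i_j)}$. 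Since $f(i_j) \neq \Phi^A_{i_j}(i_j)$, the $p_j$-exponent of $g(e)$ differs from that of $\Phi^A_j(e)$, so $g(e) \neq \Phi^A_j(e)$ for all $j \le e$.

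The real problem is the converse. You chose the contrapositive ``$\AVOID + \neg\DNR \Rightarrow \DOM$'' and then had to manufacture a dominating function out of an eventually-different one plus the failure of $\DNR$; your sketch never says how, and indeed this direction is awkward because $\neg\DNR$ gives you very little to work with uniformly. The paper takes the other split, ``$\AVOID + \neg\DOM \Rightarrow \DNR$'', and the key idea you are missing is that the eventually-different function $f$ itself is (after finite modification) the required DNR function. Concretely: take $f$ eventually different from $\F^A_{\Tot}$ and suppose toward a contradiction that $f(e) = \Phi^A_e(e)$ for $\mathcal M$-infinitely many $e$. The ``settling time'' function $s(n) = $ least $t$ such that at least $n$ many $e < t$ satisfy $\Phi^A_{e,t}(e)\!\da = f(e)$ is total and $\Delta_1^{A\oplus f}$. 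Now $\neg\DOM$ (which relativizes to $A\oplus f$ since $\F^A_{\Tot} \subseteq \F^{A\oplus f}_{\Tot}$) gives some $g \in \mathcal S$ with $g(n) \ge s(n)$ infinitely often; then $h(e) = \Phi^A_{e,g(e)}(e)$ (or $0$ if divergent) is a total function in $\F^A_{\Tot}$ that agrees with $f$ infinitely often, contradicting the choice of $f$. Hence $f(e) = \Phi^A_e(e)$ only $\mathcal M$-finitely often, and a finite modification of $f$ is $A$-DNR. The point is that $\neg\DOM$ is the useful hypothesis here, not $\neg\DNR$.
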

 \begin{proof}
That $\DOM$ implies $\AVOID$ is immediate from the definitions. To see that $\DNR$ implies $\AVOID$, first suppose that $f$ is a DNR function relative to a given $A \in \mathcal S$, that is, that $(\forall e)[\Phi^A_e(e) \neq f(e)]$. Let $\Phi^A$ be universal in the sense of Lemma~\ref{lemma-universal}, and for each $e$ 
 let $p_e$ be the $e$-th prime number, starting with $p_0=2$ and possibly continuing into the nonstandard numbers. 
Define a function $g \leq_T f$ as follows.
Given~$e$, find a sequence~${i_0,\ldots,i_e}$ of indices of partial functions, for each $j$ satisfying
	 \[
	 \Phi^A_{i_j}(i_j) = \begin{cases}
		 0 &\tn{if } \Phi^A_j(e) =0, \\
		 k &\tn{if } k \tn{ is greatest such that } p_j^k \tn{ divides } \Phi^A_j(e), \\
	  \end{cases}
	 \]
and let $g(e) = p_0^{f(i_0)} p_1^{f(i_1)} \cdots p_e^{f(i_e)}$. This ensures that $g(e) \neq\Phi^A_j(e)$ for every~${j \leq e}$; and thus for every~${j \in M}$, we have that $g$ is eventually different from $\Phi^A_j$.
 
 \medskip
 
For the other direction, we need to show that $\AVOID$ implies $\DOM \vee \DNR$. So suppose that $\AVOID + \neg \DOM$ holds relative to some $A \in \mathcal S$; that is, speaking more formally, suppose that there is no $g \in \mathcal S$ which dominates~$\F^A_{\Tot}$ and that there is an $f \in \mathcal S$ which is eventually different from every function in~$\F^A_{\Tot}$. Notice that $\F^{A }_{\Tot} \subseteq F^{A\oplus f}_{\Tot}$, which implies that $\neg \DOM$ also holds relative to $A \oplus f$. 
Suppose for a contradiction that~${f(e) = \Phi^A_e(e)}$ for $\mathcal M$-infinitely many~$e$, and define an $A \oplus f$-computable function~ $s$ by, for every~$n$, letting
$s(n)$ be the least number~$t$ such that for at least $n$ many $e<t$ we have
$\Phi^A_{e,t}(e) \da = f(e)$.	 Because $\neg \DOM$~holds relative to $A \oplus f$, there is a function $g \in \mathcal S$ such that $s(n) \leq g(n)$ $\mathcal M$-infinitely often. But then the function $h \in \mathcal S$ defined for all $e$ via
\[h(e) = \begin{cases}
		   \Phi_{e,g(e)} & \tn{if } \Phi_{e,g(e)}\da, \\
		   0&\tn{otherwise,} \\
	   \end{cases}\]
is total and $\mathcal M$-infinitely equals~$f$. This contradiction concludes the proof.
\end{proof}
 To conclude this section, we will consider the role of~$\MED$ in models of~$\neg \ISt$. In the presence of $\BSt$, we obtain the below result; however, as was the case for Proposition~\ref{prop:mad-no-ist}, this could be seen partly as a consequence of our definition of \emph{finite} for weakly represented families. What the situation looks like in the absence of $\BSt$ is left as an open question. We begin with a lemma. It is essentially about purely first-order models, but we phrase it in terms of topped models.
 \begin{lem}
  Suppose $\mathcal M \models \RCA + \BSt + \neg \ISt$ is topped by $A$. Let  $\Phi^A$ be universal in the sense of Lemma~\ref{lemma-universal}, suppose $c \in M$ is large enough that $\{e < c \colon \Phi_e^A$ is not $\mathcal M$-finite$\}$, and define a partial function $u(e,x)$ via
  \[u(e,x) = \min\{s\colon (\forall y < x)\Phi^A_{e,s}(y) \da \}.\]
  Then for each total $\Phi^A_e$ there is an $i < c$ such that $u(e,x) < u(i,c)$ $\mathcal M$-infinitely often.
 \end{lem}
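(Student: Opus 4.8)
The plan is to argue by contradiction: if the conclusion fails for some total $\Phi^A_e$, then an $A$‑recursive ``clocked'' approximation, with $u(e,\cdot)$ playing the role of the clock, will converge in the uniform sense to $\Tot^A\restrict c = \{i<c : \Phi^A_i \text{ is total}\}$, which forces that set to be $\mathcal M$‑finite and contradicts the choice of $c$. (The displayed hypothesis in the excerpt is truncated; I read it as saying $\Tot^A\restrict c$ is not $\mathcal M$‑finite, which is the natural meaning once one recalls that non‑total rows are disregarded, and is exactly what the argument uses. I likewise read the conclusion as: there is $i<c$ with $\Phi^A_i$ total and $u(e,x)<u(i,x)$ for $\mathcal M$‑infinitely many $x$ --- note $u(i,\cdot)$ is defined on an unbounded set precisely when $\Phi^A_i$ is total, and since each $u(i,\cdot)$ is non‑decreasing this version yields the other natural phrasings.) So fix a total $\Phi^A_e$. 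Then $u(e,\cdot)$ is total and $A$‑computable: for each $x$ one finds $u(e,x)$ by searching for the first $s$ with $\Phi^A_{e,s}$ converging on $[0,x)$, which terminates by totality of $\Phi^A_e$ together with $\Sigma^A_1$‑bounding. Hence $u(e,\cdot)\in\mathcal S$. Suppose toward a contradiction that for every $i<c$ with $\Phi^A_i$ total we have $u(i,x)\le u(e,x)$ for all sufficiently large $x$; this is exactly the negation of the conclusion.

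Next I would define $g\colon M\to M$ so that $g(x)$ is a code for the set
\[ \{\, i<c : \Phi^A_{i,\,u(e,x)}\ \text{converges on}\ [0,x) \,\}. \]
Each such set is a $\Delta^A_0$‑definable subset of $[0,c)$, hence $\mathcal M$‑finite, and $g\le_T A$ is total, so $g\in\mathcal S$. The clock is chosen so that $g$ stabilises pointwise to $\Tot^A\restrict c$: if $\Phi^A_i$ is total then by the contradiction hypothesis $u(i,x)\le u(e,x)$ for all large $x$, so $\Phi^A_i$ converges on $[0,x)$ within $u(e,x)$ steps and therefore $i\in g(x)$ for all large $x$; and if $\Phi^A_i$ is not total then, taking the least $y$ with $\Phi^A_i(y){\uparrow}$, we get $i\notin g(x)$ for every $x>y$. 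Thus for each $i<c$ the truth value of ``$i\in g(x)$'' is eventually constant, with eventual value ``$\Phi^A_i$ is total''.

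Pointwise stabilisation does not by itself code the limit; promoting it to \emph{uniform} stabilisation is where $\BSt$ enters, and this is the crux. Say $i$ is \emph{settled by stage $t$} if $(\forall x>t)\,[\,i\in g(x)\leftrightarrow i\in g(t+1)\,]$; since $g\in\mathcal S$ this is a $\Pi^0_1$ predicate of $(i,t)$, it is upward closed in $t$, and by the previous paragraph $(\forall i<c)(\exists t)\,[\,i\ \text{settled by}\ t\,]$. Applying $\BSt$ --- that is, $\Sigma^0_2$‑bounding, which in particular applies to a $\Pi^0_1$ matrix --- gives a stage $T$ with $(\forall i<c)(\exists t<T)\,[\,i\ \text{settled by}\ t\,]$, and upward closure then yields that every $i<c$ is settled by $T$. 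Hence $g(x)=g(T+1)$ for all $x>T$, so $\Tot^A\restrict c=g(T+1)\in\mathcal S$ is $\mathcal M$‑finite, contradicting the choice of $c$. I expect the only non‑routine step to be this single use of $\BSt$ to pass from ``each $i<c$ settles individually'' to ``all of them settle below one stage''; designing the clock $g$ out of $u(e,\cdot)$ so that this collection step is both available and stays within $\Sigma^0_2$‑bounding (rather than needing $\BSthr$) is the main idea of the proof.
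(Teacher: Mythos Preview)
Your proof is correct and follows essentially the same strategy as the paper's. Both argue by contradiction, use the failure of the conclusion to bound each $u(i,\cdot)$ by $u(e,\cdot)$ eventually, apply $\BSt$ to make this uniform in $i<c$, and then read off that $\{i<c:\Phi^A_i\text{ total}\}$ is $\mathcal M$-finite. The paper presents this more tersely---it writes the negation directly as $(\forall i<c)(\exists d)(\forall x>d)[u(i,x){\uparrow}\text{ or }u(i,x)<u(e,x)]$, swaps the leading quantifiers with $\BSt$, and observes that with the resulting uniform $d$ the totality predicate below $c$ becomes $\Pi^0_1$ (namely $(\forall x>d)[u(i,x)<u(e,x)]$), hence $\mathcal M$-finite by $\ISo$---whereas you package the same computation as an $A$-recursive approximation $g$ and a settling-time argument. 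Your reading of the truncated hypothesis and of the intended ``$u(i,x)$'' in the conclusion matches the paper's use of the lemma.
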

 \begin{proof}
  Suppose for a contradiction there is a $\Phi_e^A$ which is total and such that
  \[(\forall i < c)(\exists d)(\forall x > d)[\tn{either~}u(i,x)\ua\tn{~or~}u(i,x) < u(e,x)].\]
  Using $\BSt$ to exchange the first two quantifiers, it follows that
  \[(\exists d)(\forall i < c)(\forall x > d)[\tn{either~}u(i,x)\ua\tn{~or~}u(i,x) < u(e,x)].\]
  But this means, with knowledge of $d$ and $e$, we see that the set $\{i < c \colon \Phi_i^A$ is total$\}$ is $\Pi^0_1$ and hence $\mathcal M$-finite, a contradiction.
\end{proof}

The next proposition shows that for topped models satisfying $\BSt + \neg \ISt$,
the axiom $\neg \AVOID$ coincides with $\MED$. Topped models satisfy that there
is an oracle $A$ in the second order part such that every function is
$A$-recursive. Then the $A$-recursive functions form a weakly represented family
and thus all functions are in this family, thus this family witnesses
$\neg \AVOID$. This motivates the following proposition which shows
that under $\RCA + \BSt + \neg \ISt$ topped models satisfy $\MED$.

\begin{prop}
  Every topped model of $\RCA + \BSt + \neg \ISt$ satisfies $\MED$.
\end{prop}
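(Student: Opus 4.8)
The plan is to build the maximal eventually different family explicitly, using the function $u$ and the bound $c$ from the preceding lemma. Fix such a model $\mathcal M = (M,\mathcal S)$, topped by $A$, with $\Phi^A$ universal, and fix $c \in M$ large enough that $\{e < c : \Phi^A_e \text{ is not } \mathcal M\text{-finite}\}$ is not $\mathcal M$-finite (such $c$ exists by $\neg \ISt$ via Lemma~\ref{lem:regular}, exactly as in the proof of Proposition~\ref{prop:mad-no-ist}). For each $i < c$ define the function $v_i(x) = u(i,c) \cdot x + i$ — or more honestly, just take $v_i$ to be a function that encodes, in a way eventually-different-friendly, the sequence of values $u(i,c)$ indexed appropriately; the point is to produce, for each $i < c$, a single total function $v_i \in \mathcal S$ whose graph ``at $x$'' records whether $u(e,x) < u(i,c)$ for the $e$'s we care about. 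I would then let $\F = \{v_i : i < c \text{ and } \Phi^A_i \text{ is not } \mathcal M\text{-finite}\}$, weakly represented via a $\Sigma^A_1$ function $\Psi^A$ that outputs $v_i$ on row $i$ when it detects (by waiting) that $\Phi^A_i$ has produced more than any fixed bound of outputs. Since each $v_i$ is total and $\Delta^A_1$, and $A$ tops the model, each $v_i \in \mathcal S$, so this is a legitimate weakly represented family.

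The two defining clauses of $\MED$ must be checked. For pairwise eventual difference: I would arrange the $v_i$ so that $v_i$ and $v_j$ for $i \neq j$ differ on all but finitely many inputs — the cheapest route is to encode $i$ into the low-order part of $v_i(x)$ (e.g.\ work modulo $c$), so that $v_i(x) \not\equiv v_j(x)$ whenever $i \neq j \bmod c$; alternatively use the preceding lemma's asymmetry to separate them. For the ``infinitely often coincides'' clause, given an arbitrary $h \in \mathcal S$: since $A$ tops the model, $h = \Phi^A_e$ for some total $e$, and the preceding lemma hands us an $i < c$ with $u(e,x) < u(i,c)$ for $\mathcal M$-infinitely many $x$. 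I then need the encoding of $v_i$ to be set up precisely so that at each such $x$ the value $v_i(x)$ agrees with $h(x)$: the design is to have $v_i$ at input $x$ ``guess $h(x) = \Phi^A_{e,u(i,c)}(x)$'' for whichever $e < $ some running bound is currently the candidate, reading off $h(x)$ from the bounded search that succeeds because $u(e,x) < u(i,c)$. This forces $v_i(x) = h(x)$ at infinitely many $x$, giving the coincidence.

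The main obstacle — and the step I would spend the most care on — is reconciling two competing demands on the definition of $v_i$: it must be a \emph{single fixed total function in $\mathcal S$} (so its definition cannot mention the particular $h$ we later test against), yet at the infinitely many inputs $x$ supplied by the lemma it must land exactly on $h(x)$ for \emph{every} total $h$ simultaneously. The resolution is the standard ``finitely many functions suffice'' trick available here because there are only $< c$ relevant indices: one makes $v_i(x)$ range, as $x$ varies, over all the values $\Phi^A_{j,u(i,c)}(x)$ for $j$ below a slowly growing bound, using $\BSt$ to guarantee the bound eventually exceeds any given $e$; then for each fixed total $h = \Phi^A_e$, among the $\mathcal M$-infinitely many $x$ with $u(e,x) < u(i,c)$, a cofinal subset have the running bound already past $e$, and at those inputs $v_i$ hits $h(x)$. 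Checking that this can be done while keeping $v_i$ total, $\Delta^A_1$, and pairwise eventually different from the other $v_j$ is the delicate bookkeeping; once it is in place the rest follows as sketched, and the non-finiteness of $\F$ follows exactly as in Proposition~\ref{prop:mad-no-ist}: from an $\mathcal M$-finite index set $E$ with $\F = \{\Theta_e : e \in E\}$ one could $A$-effectively recover which $i < c$ have $\Phi^A_i$ non-$\mathcal M$-finite, contradicting $\neg \ISt$.
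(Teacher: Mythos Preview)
There is a genuine gap: your two design constraints on $v_i$ are in direct conflict, and neither of your proposed fixes resolves it. If you encode $i$ into the residue of $v_i(x)$ modulo $c$ to force pairwise eventual difference, then $v_i(x)$ can never equal $h(x)$ for any $h$ whose values avoid that residue class; this kills maximality. If instead you let $v_i(x)$ cycle through the values $\Phi^A_{j,u(i,x)}(x)$ for $j$ below a growing bound, then at each $x$ you output exactly one such value, and there is no reason the set of $x$ on which you happen to look at index $e$ should meet the $\mathcal M$-infinite set $\{x : u(e,x) < u(i,x)\}$ supplied by the lemma in an $\mathcal M$-infinite set. You flag this as ``delicate bookkeeping'' but do not supply the mechanism; in fact a static cycling scheme cannot work here, because the set from the lemma is not under your control.

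The paper's argument supplies exactly the missing idea: the family is built \emph{dynamically} by a priority construction. One lists the pairs $(i,x)$ with $i<c$ in order of $u(i,x)$ and, when $(i,x)$ comes up, defines $g_i(x)$ by first discarding all values already taken by some $g_j(x)$ with $j\neq i$ (this alone guarantees $g_i(x)\neq g_j(x)$ everywhere on the common domain, hence pairwise eventual difference without any residue trick), and then among the remaining converged $\Phi^A_e(x)$ choosing the $e$ that minimises $e + \lvert\{y<x : \Phi^A_e(y)=g_i(y)\}\rvert$. Maximality is then a pigeonhole argument: if some total $h=\Phi^A_e$ were eventually different from every total $g_i$, pick $i<c$ from the lemma; after the last agreement the priority score of $e$ freezes at some value $N$, yet at $\mathcal M$-infinitely many later stages $\Phi^A_e(x)$ is available and is passed over in favour of some $e'$ with score below $N$---but each such $e'$ can be chosen at most $N-e'$ times, a contradiction (this last step uses $\BSt$). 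Your outline has the right skeleton---index by $i<c$, make $g_i$ total iff $\Phi^A_i$ is, invoke the lemma for maximality, argue non-finiteness as in Proposition~\ref{prop:mad-no-ist}---but the priority mechanism is the engine that makes it run, and it is absent from your sketch.
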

\begin{proof}
Fix $\mathcal M$, $A$, $\Phi$, $u$ as in the Lemma.
Assume (perturbing the definition if necessary) that $u$ is injective, so that we can list all its values in strictly ascending order:
\[u(e_0,x_0) <u(e_1,x_1) < u(e_2,x_2) < \cdots.\]

We define by stages a set $(g_i)_{i < c}$ of partial functions; at stage $s = u(e_i,x_i)$, we define $g_i(x_i)$. Begin by forming the set $E = \{ e < c \colon \Phi^A_{e,s} \da$ and $(\forall j \neq i) g_j(x) \neq \Phi^A_{e,s}(x)\}$. If $E$ is empty, set $g_i(x)$ to equal some number that has not been seen previously in the construction. Otherwise, select an $e$ which minimises ${e + |\{ y < x \colon \Phi_{e,s}(y) = g_i(y)\}|}$. This completes the constrution.

We make four claims about $(g_i)_{i<c}$. The first is that if $i\neq j$, then $g_i(x) \neq g_j(x)$ for all $x$ which are both defined; this is direct from the construction. The second is that those $g_i$ which are total are eventually different; this claim follows {\em a fortiori} from the first. The third is that $g_i$ is total iff the domain of $g_i$ is unbounded in $M$ iff $\Phi_e^A$ is total; this is direct from the construction and the definition of $u(i,x)$. The fourth claim is that these $g_i$ form a maximal such family. For a contradiction, suppose the fourth claim is false, which since the model is topped means that there is some $\Phi^A_e$ which is total and eventually different from each total $g_i$. That is, formally,
\[(\forall i < c)(\exists d)(\forall x > d)[\tn{either~}g_i(x)\ua\tn{~or~}g_i(x) \neq \Phi_e^A(x)].\]

Using $\BSt$ to exchange the leading quantifiers, this becomes
\[(\exists d)(\forall i < c)(\forall x > d)[\tn{either~}g_i(x)\ua\tn{~or~}g_i(x) \neq \Phi_e^A(x)].\]

Keeping $e$ fixed, let $i$ be as provided by the Lemma. Consider ${n(x) = e + |\{y < x \colon \Phi^A_{e,s}(y)=g_i(y)\}|}$, the function used during the construction of~$g_i$. Clearly $n$ is nondecreasing; and the equation above tells us is that $n(x)$~reaches some $\mathcal M$-finite limit $N$, after some stage $s = \langle i,x\rangle$ of the construction.
  Then at each stage~${t = \langle i,x'\rangle}$ that is greater than $s$, we know that $\Phi_e^A(x')$ was not chosen to equal $g_i(x')$.
  Since $i$ is given by the Lemma, we know that $u(e,x) < u(i,x)$ for $\mathcal M$-infinitely many $x$; hence at $\mathcal M$-infinitely many stages, there was another $e ' \in E$ with
  \[e' + |\{ y < x \colon \Phi_{e',s}(y) = g_i(y)\}| < N. \]
But this is possible only if $e' < N$, and thus no more than $N - e'$ times. This contradicts the pigeonhole principle, and proves the fourth claim. 
Therefore $(g_i)_{i<c}$ is a MED family, as required.
\end{proof}

\section{Hyperimmunity in Reverse Mathematics}\label{section-ndf}

The following principle is a reverse mathematics formalisation of the existences of hyperimmune degrees using weakly represented families.
\begin{statement} [Hyperimmunity principle, \HIM]
Given any weakly represented family~$\mathcal F$ of functions, there exists
a function $g$ such that for all $f\in\mathcal F$, there are unboundedly
many $x\in M$ such that $g(x)>f(x)$; in other words, there is no $f\in\mathcal F$ such that $g$ is almost everywhere less than~$f$.
\end{statement}

The following statement is immediate.
 \begin{prop}
  $\RCA \vdash \DOM \rightarrow \HIM$. \qed
 \end{prop}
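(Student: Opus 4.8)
The plan is to show that any function $g$ witnessing $\DOM$ for a weakly represented family $\F$ also witnesses $\NDF$ for the same family. This is a direct logical implication: if $g$ dominates every $f \in \F$ --- meaning $g(x) > f(x)$ for all sufficiently large $x$ --- then in particular for every $f \in \F$ there are unboundedly many $x$ with $g(x) > f(x)$, namely all large enough $x$. So no $f \in \F$ dominates $g$; in fact $g$ dominates each such $f$, which is the stronger conclusion. There is no genuine obstacle here, since the hypothesis of $\DOM$ and that of $\NDF$ are the same (an arbitrary weakly represented family of functions) and the conclusion of $\DOM$ literally implies the conclusion of $\NDF$.

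Concretely, I would argue as follows. Fix a model $\mathcal M = (M, \mathcal S) \models \RCA + \DOM$, and let $\F$ be any weakly represented family of functions. By $\DOM$ there is a function $g$ dominating $\F$, so for each $f \in \F$ there is a bound $x_f \in M$ with $g(x) > f(x)$ for all $x > x_f$. Given any $b \in M$, if $x_f \geq b$ take $x = x_f + 1 > x_f$, and if $x_f < b$ take $x = b$; in either case $x \geq b$ (indeed $x > x_f$) and $g(x) > f(x)$. Hence for every $f \in \F$ the set $\{x \in M : g(x) > f(x)\}$ is unbounded in $M$, which is exactly the statement that $g$ witnesses $\NDF$ for $\F$. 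Since $\F$ was an arbitrary weakly represented family, $\NDF$ holds.

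The only point worth a sentence of care is that the proof stays inside $\RCA$: the single function $g$ is produced by $\DOM$ and lies in $\mathcal S$, and the verification above is a purely quantifier-manipulation argument using no induction beyond what $\RCA$ already provides. Thus $\RCA \vdash \DOM \rightarrow \NDF$, as claimed; the statement is in fact the trivial one-line implication already flagged by its labelling as immediate.
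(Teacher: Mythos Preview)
Your argument is correct and matches the paper's treatment: the paper simply declares the implication immediate (the \qed is placed inline with the statement and no proof is given), and your write-up just spells out that triviality carefully. There is nothing to add or correct.
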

The reverse direction does not hold, as this example shows.
\begin{exmp}
Consider the $\omega$-model whose second-order part $\mathcal S$ consists of all
second-order objects whose Turing degree is low for Martin-L\"of random.
These oracles form a Turing ideal not containing any high Turing degree,\footnote{See, for instance, Nies~\cite[Chapter~5]{Nies}.}
thus $\DOM$ does not hold in this $\omega$-model. Furthermore,
every set in $\mathcal S$ is
$\Delta_2$ and $\mathcal S$ is not topped. Thus for every $A \in \mathcal S$ there is a $B \in \mathcal S$ with $B \gneq_T A$ hyperimmune relative to $A$.
Thus $\HIM$ holds.
\end{exmp}

The following theorem confirms the separation of these two notions
with an alternative proof; it separates~$\DOM$ from~$\HIM$ by showing that,
whereas $\DOM + \BSt$ implies full arithmetical induction as observed in
Theorem~\ref{dom-induction}, $\HIM + \ISt$ does not even imply $\ISthr$.
 \begin{mythm}
  $\RCA + \ISt + \HIM$ is $\Pi^1_1$-conservative over $\RCA + \ISt$.
 \end{mythm}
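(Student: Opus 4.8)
The goal is to show that $\RCA + \ISt + \NDF$ is $\Pi^1_1$-conservative over $\RCA + \ISt$, and the natural route is Harrington's forcing-of-models method used already in Theorem~\ref{DOM-1}: starting from a countable model $\mathcal M_0 = (M,\mathcal S_0) \models \RCA + \ISt$ topped by a function $f_0$, build an $\omega$-chain of countable models $\mathcal M_{n+1} = \mathcal M_n[f_{n+1}] = (M,\mathcal S_{n+1})$ each satisfying $\ISt$ and each topped by $f_{n+1}$, arranged so that $f_{n+1}$ is nondominated over $\F^{f_n}_{\Tot}$; the union $\mathcal M_\omega = (M,\bigcup_n \mathcal S_n)$ then satisfies $\RCA + \ISt + \NDF$ and has the same first-order part, which is exactly what Harrington's observation requires. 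The substantive difference from the $\DOM$ case is twofold: (1) the witness we build must be nondominated rather than dominating, which is a weaker requirement and affords more freedom; and (2) we must \emph{preserve} $\ISt$ rather than merely $\ISo$, so the forcing conditions need an extra ingredient that forces the $\Sigma_2$-theory as well.

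\textbf{Construction of $f_{n+1}$.}
Working inside the fixed countable model $\mathcal M_n \models \ISt$ topped by $f_n$, externally enumerate: a list $(g_i)_{i<\omega}$ of all total $\Delta_1^{f_n}$ functions; a list $(\phi_i^{f_n})_{i<\omega}$ of all $\Sigma_2^{f_n}$ formulae (in the language with a distinguished new function symbol for the generic); and an increasing sequence $(c_k)_{k<\omega}$ cofinal in $M$. Build $f_{n+1}$ by finite conditions $\sigma_k \in M^{<M}$, $\sigma_0$ empty, choosing $\sigma_{k+1}$ to satisfy: (i) $\sigma_{k+1} \supsetneq \sigma_k$; (ii) $|\sigma_{k+1}| > c_k$; (iii) for the least $x$ with $|\sigma_k| < x \le |\sigma_{k+1}|$, we put $\sigma_{k+1}(x) > g_k(x)$ (this single ``escape'' point per requirement suffices for nondomination, since requirement $k$ will fire infinitely often as $n$ is fixed and we revisit it—more simply, index the requirements so that $g_i$ is diagonalized cofinally often); and (iv) the usual jump-forcing clause: decide $\phi_k^{f_n}$, i.e.\ either $\mathcal M_n \models \phi_k^{f_n}(\sigma_{k+1})$ or $\mathcal M_n \models (\forall \tau \supseteq \sigma_{k+1})[\neg \phi_k^{f_n}(\tau)]$. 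Because every $k$ lies in $\omega$ and $\mathcal M_n \models \ISt$, a suitable $\sigma_{k+1}$ exists at each stage: clauses (i)--(iii) are trivially satisfiable, and clause (iv) is satisfiable because a $\Sigma_2^{f_n}$ statement that is not forced false by some extension is forced true by some extension. Requirements (i),(ii) make $f_{n+1}$ a genuine total function on all of $M$ (its domain is not a proper cut); (iii) makes $f_{n+1}$ nondominated over every $\Delta_1^{f_n}$ function, hence over $\F^{f_n}_{\Tot}$; and (iv) is the forcing clause that will be used to verify $\ISt$ in $\mathcal M_{n+1}$.

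\textbf{Verifying $\ISt$ in $\mathcal M_{n+1}$.}
The point of deciding all $\Sigma_2^{f_n}$ formulae about the generic is that it lets one run the standard ``$\Sigma_2$-genericity preserves $\Sigma_2$-induction'' argument: for a $\Sigma_2^{f_{n+1}}$ formula $\theta(x)$ with parameters, unfold the definition of $f_{n+1}$ (a $\Delta_1^{f_n}$ object relative to the external sequence of conditions, but internally we argue via the forcing relation) to see that the forcing relation for $\Sigma_2^{f_n}$-statements is itself $\Sigma_2^{f_n}$, so a failure of $\Sigma_2$-induction in $\mathcal M_{n+1}$ would pull back, via the decided conditions, to a failure of $\Sigma_2^{f_n}$-induction (equivalently, a non-regular $\Sigma_2^{f_n}$ set) in $\mathcal M_n$, contradicting $\mathcal M_n \models \ISt$ together with Lemma~\ref{lem:regular}. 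Equivalently and more concretely, one checks that every $\Sigma_2^{f_{n+1}}$ set is $\Sigma_2^{f_n}$-approximable in the sense needed for regularity, using clause (iv); since $\mathcal M_n \models \ISt$ all such sets are regular, so by Lemma~\ref{lem:regular} $\mathcal M_{n+1} \models \ISt$. One also notes $\mathcal M_{n+1}$ is topped by $f_{n+1}$ since $\mathcal S_{n+1}$ is the $\Delta^0_1$-closure of $\mathcal S_n \cup \{f_{n+1}\}$ and $f_n \le_T f_{n+1}$ by (iii) applied to $g = f_n$'s graph functions, or directly because $f_n \in \mathcal S_{n+1}$ as a parameter.

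\textbf{Taking the union.}
Finally $\mathcal M_\omega = (M,\bigcup_n \mathcal S_n)$ has first-order part $M$, satisfies $\RCA$ (each axiom is first-order or $\Delta^0_1$-comprehension, both preserved under the directed union), satisfies $\ISt$ (any $\Sigma^0_2$ instance uses finitely many set parameters, hence lies in some $\mathcal S_n$, where $\ISt$ holds—and $\ISt$ for a fixed parameter set is absolute between $\mathcal M_n$ and $\mathcal M_\omega$ since it is first-order in that parameter), and satisfies $\NDF$: any weakly represented family $\F$ in $\mathcal M_\omega$ is weakly represented by some $\Sigma^{A}_1$ function with $A \in \mathcal S_n$, so $\F \subseteq \F^{f_n}_{\Tot}$ (enlarging $A$ to $f_n$), and $f_{n+1} \in \mathcal S_{n+1} \subseteq \bigcup_m \mathcal S_m$ is nondominated over it. By Harrington's method this proves the $\Pi^1_1$-conservativity.

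\textbf{Main obstacle.}
The routine clauses (i)--(iii) cause no trouble; the delicate point is clause (iv) and the verification that it genuinely secures $\ISt$ in $\mathcal M_{n+1}$ rather than merely $\ISo$. The subtlety is that the generic $f_{n+1}$ is added by a forcing notion ($M^{<M}$ ordered by extension) whose conditions and basic forcing relation one must check are themselves of controlled complexity over $f_n$—$\Sigma_2^{f_n}$ for $\Sigma_2$-statements, via $\ISt$ in $\mathcal M_n$—so that a hypothetical $\Sigma_2$-induction failure in the extension reflects down to $\mathcal M_n$. This is the analogue, one level up, of the ``forcing the jump preserves $\ISo$'' step in Theorem~\ref{DOM-1}, and getting the complexity bookkeeping exactly right (in particular that deciding all $\Sigma_2^{f_n}$ formulae about the generic, as in (iv), is what is needed and is achievable within $\omega$-many stages) is where the real work lies.
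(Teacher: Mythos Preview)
Your route differs from the paper's. You mimic the proof of Theorem~\ref{DOM-1} one level up, building a Cohen-type generic over $M^{<M}$ and deciding $\Sigma_2^{f_n}$ formulae so as to preserve $\ISt$. The paper instead reaches for the Low Basis Theorem: starting from $\mathcal M_0 \models \RCA+\ISt$ topped by $A_0$, it takes an $A_0$-computable infinite binary tree with no $A_0$-computable path and extracts a path $A_1$ that is low over $A_0$ (and computes $A_0$). The settling-time function of any $\Delta_1^{A_0}$ approximation to $A_1$ is then automatically nondominated by every total $\Delta_1^{A_0}$ function (else $A_1$ would be $\Delta_1^{A_0}$), and lowness gives $A_1' \equiv_T A_0'$, so every $\Sigma_2^{A_1}$ set is already $\Sigma_1^{A_0'}$ and hence regular by $\ISt$ in $\mathcal M_0$ via Lemma~\ref{lem:regular}; thus $\mathcal M_0[A_1]\models\ISt$ with no forcing bookkeeping at all. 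Iterating and taking the union finishes as in your outline. The paper's argument is shorter precisely because lowness collapses the $\Sigma_2$-theory of the extension back to that of the ground model, whereas your approach must earn $\ISt$-preservation by hand.

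Your plan can be made to work, but as written there is a real gap in clause~(iv) and in the $\ISt$ verification. You copied~(iv) verbatim from the $\Sigma_1$ case: ``either $\mathcal M_n\models\phi_k^{f_n}(\sigma_{k+1})$ or $(\forall\tau\supseteq\sigma_{k+1})[\neg\phi_k^{f_n}(\tau)]$''. For a $\Sigma_1$ formula this makes sense because substituting a finite string as partial oracle already decides truth; for a $\Sigma_2$ formula $\exists u\,\forall v\,\psi$ it does not---you must first \emph{define} the forcing relation (e.g.\ $\sigma\Vdash\forall v\,\psi$ as the $\Pi_1^{f_n}$ statement that every extension satisfies $\psi$, and $\sigma\Vdash\exists u\forall v\,\psi$ as the $\Sigma_2^{f_n}$ statement that some extension forces the matrix), check that deciding conditions are dense, and then prove that meeting all these dense sets makes every $\Sigma_2^{f_{n+1}}$ set regular. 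Your ``Main obstacle'' paragraph correctly identifies this as the crux but does not carry it out; in particular the one-line reflection ``a failure of $\Sigma_2$-induction in $\mathcal M_{n+1}$ would pull back \ldots\ to a failure in $\mathcal M_n$'' hides a genuine argument (essentially that a $2$-generic $G$ over $A$ satisfies $(G\oplus A)''\equiv_T G\oplus A''$, transported to the nonstandard setting). The paper's low-basis route sidesteps all of this.
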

 \begin{proof}
Fix a model $\mathcal M = (M,\mathcal S)$ of $\RCA + \ISt$ topped by a set $A_0 \in\mathcal S$. Applying the Low Basis Theorem to an $M$-infinite $A_0$-computable binary tree without $A_0$-computable paths we obtain a set $A_1 \in \Delta_2^{A_0} \setminus  \Delta_1^{A_0}$ that is low relative to ${A_0}$ and such that $A_0 \in\Delta_1^{A_1}$. 
  
Given any $A_0$-computable approximation $(A_{1,s})_{s \in M}$ to $A_1$, 
its {\em settling-time $f$}, given by
   \[f(x) = \min\{s\colon (\forall s'>s) [f(s')=f(s)]\}\]
for all~$x$, is not dominated by any total $\Delta_1^{A_0}$ function (as otherwise $A_1\in\Delta_1^{A_0}$ after all). Furthermore, since $A_1$~is low relative to ${A_0}$, all $\Sigma_2^{A_1}$ sets are $\Sigma_1^{A_0'}$ and hence regular by Lemma~\ref{lem:regular}. Therefore, $\mathcal M_1 = \mathcal M_0[A_1]$ is a model of~$\ISt$. 
   
Repeating the argument, we can obtain $M_n$ for all $n < \omega$, and finally we can let $M_\omega$ be the model obtained in the limit. Then $\mathcal M_\omega \models \RCA + \ISt + \HIM$, as desired.
\end{proof}

\section{Bi-immunity}\label{section-mind}



\begin{statement}[Bi-immunity principle, \BIM]
For every weakly represented family $\mathcal{F}$ of infinite sets there is
a set $B$ such that there is no $A\in \mathcal{F}$ with
$A\subseteq B$ or $A\subseteq \overline{B}$.
\end{statement}

We can rephrase this statement in terms of the universal sets of Lemma \ref{lemma-universal} as follows: $\BIM$~states that, for every $C $, there exists a $B $ for which there is no infinite $\Delta^C_1$~set $A =\Phi_e^C$ such that either $A \subseteq B$ or $A \subseteq \overline B$.

%
The following is a formalization of the well-known fact that every hyperimmune set  computes a bi-immune set. Note that, in combination with our previous results, this creates a connection between $\BIM$ on the one hand and $\DOM$, $\MAD$ and induction strength on the other.
\begin{prop}
 $\RCA + \HIM \vdash \BIM$.
\end{prop}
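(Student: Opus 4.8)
The plan is to show that, given a function $g$ witnessing $\NDF$ relative to a suitable oracle, we can build a bi-immune set $B$ computable from $g$; the key idea is that a fast-growing (non-dominated) function lets us carve $M$ into consecutive blocks so large that every infinite $\Delta^C_1$ set must meet both the ``$B$-part'' and the ``$\overline B$-part'' of some block. Concretely, fix $C \in \mathcal S$ and let $\Phi^C$ be universal as in Lemma~\ref{lemma-universal}. Consider the weakly represented family $\F$ consisting of those rows of $\Phi^C$ that are total, strictly increasing, and enumerate an infinite set --- more precisely, for each $e$ let $p_e$ be the principal function (the increasing enumeration) of the set $\Phi^C_e$ whenever $\Phi^C_e$ is an infinite set, and note that ``$\Phi^C_e$ codes an infinite set'' together with ``$p_e$ is total'' is a $\Pi^0_2$ condition, so by Lemma~\ref{key-lemma} (working in $\RCA + \ISt$; but see the remark below about dropping $\ISt$) the family $\{p_e : \Phi^C_e \text{ is infinite}\}$ is weakly represented. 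Apply $\NDF$ to this family to get a function $g$ not dominated by any $p_e$.

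Next I would use $g$ to define $B$. Without loss of generality take $g$ strictly increasing (replace $g(x)$ by $\max_{y\le x} g(y) + x$, which is still not dominated by any $p_e$). Define inductively a sequence $a_0 = 0$, $a_{n+1} = g(a_n) + 1$, so that the intervals $I_n = [a_n, a_{n+1})$ partition $M$ --- this sequence is $\Delta^g_1$, hence in $\mathcal S$, and by $\ISo$ (available in $\RCA$) it is defined on all of $M$ and regular. Split each $I_n$ into a ``left half'' $L_n$ and ``right half'' $R_n$, say $L_n = [a_n, a_n + \lfloor |I_n|/2\rfloor)$ and $R_n = I_n \setminus L_n$; since the intervals are getting long, both halves are eventually nonempty. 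Put $B = \bigcup_n L_n$. Then $B \le_T g$, so $B \in \mathcal S$.

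The remaining step is to verify bi-immunity. Suppose toward a contradiction that some infinite $A = \Phi^C_e$ satisfies $A \subseteq B$ (the case $A \subseteq \overline B$ is symmetric). Let $p = p_e$ be its principal function, a member of the family fed to $\NDF$. I claim $p$ dominates $g$, contradicting the choice of $g$. Indeed, fix $n$ large enough that both $L_n$ and $R_n$ are nonempty. Since $A \subseteq B$, $A$ misses all of $R_n$, and $R_n$ contains the point $a_{n+1} - 1 = g(a_n)$; more usefully, between any element of $A$ in $I_m$ (for $m \le n$) and the next element of $A$, the set $A$ must skip over $R_m$. A cleaner way to run the argument: for each $x$, let $n$ be such that $a_n \le x < a_{n+1}$; the number of elements of $A$ below $a_{n+1}$ is at most $\sum_{m \le n} |L_m| \le a_{n+1}$, but also $A$ has at least... --- the precise bookkeeping I expect to be the main obstacle, and I would phrase it as: because $A$ avoids every $R_m$, the principal function $p$ of $A$ satisfies $p(k) \ge a_{n}$ only after $A$ has used up all of $L_0 \cup \dots \cup L_{n-1}$, which forces $p(k) \ge a_n > g(a_{n-1}) \ge g(p(k-|L_0\cup\dots\cup L_{n-1}|))$ for appropriate $k$; iterating and using that the $|L_m|$ grow, one gets $p(k) > g(k)$ for all sufficiently large $k$, i.e.\ $p$ dominates $g$. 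This contradiction establishes $\BIM$. Finally, I would remark that $\ISt$ is not actually needed here if one instead feeds $\NDF$ the family of all total rows $\Phi^C_e$ directly (available in plain $\RCA$ by Lemma~\ref{lemma-universal}) after coding each candidate infinite set's principal function into a row in an $\RCA$-effective way, so that the proposition holds over $\RCA$ as stated.
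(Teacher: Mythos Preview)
Your block construction is the right idea for deriving $\BIM$ from $\DOM$, but it does not work from $\NDF$. The crux is your claim that if an infinite $\Delta^C_1$ set $A$ satisfies $A\subseteq B=\bigcup_n L_n$, then its principal function $p_A$ \emph{dominates} $g$. This is not established, and in fact fails in general: $A\subseteq B$ only gives $p_A(k)\ge p_B(k)$, and $p_B(k)$ is roughly $2k$ (since $B$ has density about $1/2$), whereas your $g$ is merely \emph{not dominated} by any $p_e$---it may still be very large on an infinite set. Concretely, ``$g$ not dominated by $p_A$'' means $g(n)\ge p_A(n)$ infinitely often, which is \emph{not} the negation of ``$p_A$ dominates $g$''; the two conditions can hold simultaneously. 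Your hand-wavy bookkeeping (``$A$ has used up all of $L_0\cup\cdots\cup L_{n-1}$'') is backwards: $A$ need not exhaust the $L_m$'s, it can skip most of them, and that only makes $p_A$ larger---but still not provably larger than $g$ everywhere.

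The paper's proof uses a genuinely different mechanism: it applies $\NDF$ not to principal functions but to the family of \emph{use functions} of the $\Phi^A_e$, obtaining a time bound $f$ such that for each total $\Phi^A_e$ there are infinitely many $y$ with $\Phi^A_{e,f(y)}(y)\da$. One then builds $B$ by a finite-injury diagonalization: at each $y$, look at those $e<y$ whose computation $\Phi^A_{e,f(y)}(y)$ has converged and which still agree with $B$ so far, and force a disagreement with the highest-priority such $e$. This is the standard route from hyperimmune degree to bi-immunity; the block construction you attempted is the standard route from \emph{dominating} degree. (Your secondary worry about needing $\ISt$ via Lemma~\ref{key-lemma} can indeed be avoided, as you suspected, but that is moot given the main gap.)
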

     
\begin{proof}
	Let $\mathcal M = (M, \mathcal S)$ be a model of $\HIM$, and fix any $A \in \mathcal S$. It is sufficient to construct a set~$B$ such that no $C=\Phi^A_e$ is contained in~$B$ or~$\overline B$. 
	
	Consider the family of use functions of the functionals $\Phi^A_{e}$ for every~$e$, and let $f$ witness $\HIM$ with respect to that family.	In order to decide which elements we want to put into~$B$, we proceed as follows for each~$y$ in ascending order: 
	Check if there exists an $e < y$ such that~$\Phi^A_{e,f(y)}(y) \da$ but such that for all $x < y$, 
	\[\text{either} \quad \Phi^A_{e, f(y)}(x) \ua \quad \text{or} \quad \Phi^A_{e,f(y)}(x) \da = B(x).\]
	If no such $e$'s exist, then let $y \not \in B$; and if they do exist, then pick the least such~$e$ and put~$y$ into~$B$ or~$\overline B$ in such way as to ensure $B(y) \neq \Phi^A_e(y)$.  By $\ISo$, every total $\Phi_e^A$ eventually gets treated in this way.
\end{proof}
The converse result does not hold. This can either be shown  directly by constructing a model of $\RCA$ that satisfies $\BIM$ but not $\HIM$, or by employing the following conservation result.
\begin{mythm}
$\BIM$ is $\Pi^1_1$-conservative over $\RCA$.
\end{mythm}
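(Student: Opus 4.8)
The plan is to apply Harrington's method recalled above: it suffices to take an arbitrary countable model $\mathcal M_0=(M,\mathcal S_0)$ of $\RCA$ --- which, passing to a submodel generated by a single set if necessary, we may assume is topped by some $A_0\in\mathcal S_0$ --- and to build an increasing $\omega$-chain $\mathcal M_0\subseteq\mathcal M_1\subseteq\cdots$ of countable models of $\RCA$ with common first-order part $M$, such that the union $\mathcal M_\omega=(M,\bigcup_n\mathcal S_n)$ satisfies $\BIM$. The union will automatically be a model of $\RCA$, since $\ISo$ and $\Delta^0_1$-comprehension are preserved in unions of chains of models of $\RCA$ sharing a first-order part (any instance involves only a standard formula and finitely many set parameters, all of which already live in some $\mathcal S_n$). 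At stage $n$, with $\mathcal M_n$ topped by $A_n$, I would adjoin a set $B_{n+1}$ that is \emph{bi-immune relative to $A_n$}, meaning that no $\mathcal M$-infinite $\Delta_1^{A_n}$ set is contained in $B_{n+1}$ or in its complement, and set $\mathcal M_{n+1}=(M,\mathcal S_{n+1})$ where $\mathcal S_{n+1}$ is the closure of $\mathcal S_n\cup\{B_{n+1}\}$ under $\Delta^0_1$-comprehension; this $\mathcal M_{n+1}$ is topped by $A_{n+1}:=A_n\oplus B_{n+1}$.

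To produce $B_{n+1}$ I would run an $A_n'$-recursive forcing with $\mathcal M$-finite binary strings as conditions, closely parallel to the construction in the proof of Theorem~\ref{DOM-1} but \emph{without} the domination requirement: one keeps the requirements that each $\sigma_{k+1}$ properly and cofinally extends $\sigma_k$, so that $B_{n+1}$ is a genuine total binary function, and that along the generic every standard $\Sigma^{A_n}_1$ set of strings is decided (``forcing the jump''). The bi-immunity requirements --- for each $C$ in the externally countable list of $\mathcal M$-infinite $\Delta_1^{A_n}$ sets, put one element of $C$ into $B_{n+1}$ and a later one into $\overline{B_{n+1}}$ --- are met by a single finite extension of the current condition; in fact each such requirement corresponds to a dense $\Sigma^{A_n}_1$ set of strings and so is already handled by the jump-forcing, so $B_{n+1}$ may simply be taken to be a low $1$-generic over $A_n$. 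Two points then need checking: bi-immunity of $B_{n+1}$ over $A_n$, which is immediate from the requirements; and $\mathcal M_{n+1}\models\ISo$, which I expect to be the main obstacle. The latter follows from the same ``forcing the jump'' argument as in Theorem~\ref{DOM-1}: the relevant $\Sigma^0_1$ facts about the generic $B_{n+1}$ over $A_n$ are decided by $\mathcal M$-finite conditions, and $\mathcal M_n\models\ISo$ transfers this to $\ISo$ in $\mathcal M_{n+1}$ (equivalently, via Lemma~\ref{lem:regular}, to regularity of the top's jump). This step is genuinely easier here than in Theorem~\ref{DOM-1}, as there is no dominating function to construct and hence no tension between the domination and the genericity requirements: adding a sufficiently generic set over a model of $\ISo$ preserves $\ISo$.

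It remains to verify $\mathcal M_\omega\models\BIM$. Let $\F$ be a weakly represented family of $\mathcal M$-infinite sets in $\mathcal M_\omega$, weakly represented by some $\Psi$ all of whose set parameters lie in $\mathcal S_n$; then every member of $\F$ is $\Delta_1^{A_n}$ and $\mathcal M$-infinite, where $A_n$ tops $\mathcal M_n$. Since $B_{n+1}\in\mathcal S_{n+1}\subseteq\bigcup_m\mathcal S_m$ and $B_{n+1}$ is bi-immune relative to $A_n$, no member of $\F$ is contained in $B_{n+1}$ or in $\overline{B_{n+1}}$, so $B_{n+1}$ witnesses $\BIM$ for $\F$. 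As $\F$ was arbitrary, $\mathcal M_\omega\models\RCA+\BIM$, and by Harrington's criterion this proves the theorem.
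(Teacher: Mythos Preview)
Your proposal is correct and follows essentially the same route as the paper: Harrington's iterated-extension method, adjoining at each stage a set that is bi-immune relative to the current top while forcing the jump to preserve $\ISo$. The only cosmetic difference is that you phrase the jump control as $1$-genericity (deciding each $\Sigma^{A_n}_1$ set of strings, exactly as in Theorem~\ref{DOM-1}), whereas the paper's even stages use the Harrington/Jockusch--Soare ``maximize the number of $e<c_i$ with $\Phi_e^{\sigma}(e){\downarrow}$'' device; both yield the needed preservation of $\ISo$, and your observation that the bi-immunity requirements are themselves dense $\Sigma^{A_n}_1$ sets (hence automatically met by a $1$-generic) is a clean way to fold them into the genericity. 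One small quibble: ``passing to a submodel generated by a single set'' is not quite the right reduction to the topped case, since Harrington's criterion requires $\mathcal S_0\subseteq\bigcup_n\mathcal S_n$; the standard fix is to interleave the original sets of $\mathcal S_0$ into the tower (e.g.\ set $A_{n+1}=A_n\oplus B_{n+1}\oplus C_n$ for an external enumeration $(C_n)_{n<\omega}$ of $\mathcal S_0$). The paper makes the same tacit assumption, so this is not a substantive gap.
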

The proof is an imitation of that used by Harrington \cite[IX.2]{SS} in proving that $\WKL$ is $\Pi^1_1$-conservative over $\RCA$.
\begin{proof}
Suppose that $\mathcal M = (M, \mathcal S)$ is countable model of $\RCA$ topped by $A \in \mathcal S$. If we can construct an~${A_1 \subseteq M}$ such that $M_1 = M[A_1] \models \ISo$ and such that $A_1$ is bi-immune relative to $A$ (in the sense that for no $e$ is $\Phi_e^A$ the characteristic function of an $\mathcal M$-infinite set which is either a subset of $A_1$, or a subset of~$\overline{A_1}$), then we can iterate the construction to obtain $\mathcal M_2 = \mathcal M_1[A_2]$, and so on, until we obtain in the limit a model $M_\omega$ of $\RCA + \BIM$. 
	
	\medskip
	
	We construct $A_1$ as the limit of a sequence of initial segments $(\sigma_i)_i$, as follows. To begin, let $(B_i)_{i < \omega}$~be an enumeration of all $\mathcal M$-infinite $A$-computable sets, and let $(c_i)_{i < \omega}$ be a cofinal sequence in~$M$, both ordered by the true natural numbers $\omega$.
\begin{itemize}
	\item At stage $0$, let $\sigma_0$ be the empty string.
 \item At stages of the form $s+1 =2i+1$ for some~$i$, let $\sigma_{s+1}$ be the shortest extension of~$\sigma_s$ such that $\sigma_{s+1}$ disagrees with (the characteristic function of) $B_i$.
	
	\item At stages of the form $s+1 = 2i+2$ for some~$i$, let $\sigma_{s+1}$ be the shortest extension of~$\sigma_s$ which maximises the size of $\{e < c_i \colon  \Phi_e(e) \da\}$.
\end{itemize}
The odd stages ensure bi-immunity, and the even stages guarantee that $\ISo$ holds.
\end{proof}


\begin{thebibliography}{111}

 
 \bibitem{CQSY} Chi Tat Chong, Lei Qian, Theodore A.~Slaman, and Yue Yang. \textit{$\Sigma_2$ induction and infinite injury priority arguments: III. Prompt sets, minimal pairs and Shoenfield's conjecture}, Israel Journal of Mathematics, 121(1):1--28, 2001.
 
 
\bibitem{HJS} Rupert H\"olzl, Sanjay Jain and Frank Stephan, \textit{Inductive inference and reverse mathematics}, Annals of Pure and Applied Logic, 167:1242--1266, 2016.
\bibitem{HRSZ17} Rupert H\"olzl, Dilip Raghavan, Frank Stephan, and Jing Zhang. \textit{Weakly represented families in reverse mathematics}, in \textit{Computability and Complexity}, Lecture Notes in Computer Science~10010, pages 160--187, Springer, 2017






\bibitem{KP} Laurie A.~Kirby and Jeff B.~Paris. \textit{$\Sigma_n$-collection schemas in arithmetic}, in \textit{Proceedings of the Logic Colloquium '77, {W}roc\l aw}, pages 199--209, North Holland, 1978.


\bibitem{KMS} Bj\o rn Kjos-Hanssen, Wolfgang Merkle, and Frank Stephan. \textit{Kolmogorov complexity and the recursion theorem}, in \textit{Proceedings of the 23rd Symposium on Theoretical Aspects of Computer Science, Marseille}, Lecture Notes in Computer Science~ 3884, pages 149-161, Springer, 2006.
\bibitem{Martin66a} Donald A.\ Martin. \textit{Classes of recursively enumerable sets and degrees of unsolvability}, Zeitschrift f\"ur Mathematische Logik und Grundlagen der Mathematik, 12:295-310, Wiley, 1966.
\bibitem{Nies} Andr\'e Nies, \textit{Computability and Randomness}, Oxford University Press, 2009.

\bibitem{SS} Stephen G.\ Simpson. \textit{Subsystems of Second Order Arithmetic (second edition)}, Cambridge University Press, 2006.

\bibitem{Y69} C. E. M.\ Yates. \textit{On the degrees of index sets II}, Transactions of the American Mathematical Society, 135:249--266, American Mathematical Society, 1969.

\end{thebibliography}
\end{document}